\def\O{\Omega}
\def\pa{\partial}
\newcommand{\bn}{{\bf n}}
\newcommand{\bv}{{\bf v}}
\def\pa{\partial}
\def\O{\Omega}
\def\bvarphi{{\boldsymbol{\varphi}}}
\def\3bar{{|\hspace{-.02in}|\hspace{-.02in}|}}
\def\3bar{{|\hspace{-.02in}|\hspace{-.02in}|}}
\numberwithin{equation}{section}
 \newtheorem{theorem}{Theorem}[section]
 \newtheorem{lemma}[theorem]{Lemma}
  \def\b#1{\mathbf{#1}} \def\tx#1{\hbox{#1}}
\def\a#1{\begin{align*}#1\end{align*}} \def\an#1{\begin{align}#1\end{align}}
\newtheorem{WG}{Weak Galerkin Algorithm}
\title[Weak Galerkin Finite Element]
{Weak Galerkin finite element methods for elliptic interface problems on nonconvex polygonal partitions}
  \author {Chunmei Wang$\dagger$} 
  \address{Department of Mathematics, University of Florida, Gainesville, FL 32611, USA. }
  \email{chunmei.wang@ufl.edu}
  \thanks{The research of Chunmei Wang was partially supported by National Science Foundation Grant DMS-2136380.}   \thanks{$\dagger$ The corresponding author. }
\author {Shangyou Zhang}
\address{Department of Mathematical Sciences,  University of Delaware, Newark, DE 19716, USA}   \email{szhang@udel.edu}
\begin{document}

\begin{abstract}This paper proposes a weak Galerkin (WG) finite element method for elliptic interface problems defined on nonconvex polygonal partitions. The method features a built-in stabilizer and retains a simple, symmetric, and positive definite formulation. An optimal-order error estimate is rigorously derived in the discrete 
$H^1$
  norm. Furthermore, a series of numerical experiments are provided to verify the theoretical results and to demonstrate the robustness and effectiveness of the proposed WG method for elliptic interface problems.

\end{abstract}
\keywords{
weak Galerkin, finite element methods, elliptic interface problems, weak gradient, polygonal partitions, nonconvex. }
 
\subjclass[2010]{65N30, 65N15, 65N12, 65N20}
 
\maketitle

\section{Introduction}

This paper is concerned with the development and analysis of a weak Galerkin (WG) finite element method for elliptic interface problems posed on both convex and nonconvex polygonal partitions. We consider the following model problem: find a function $u$ such that
\begin{eqnarray} 
-\nabla\cdot({\color{black}{a}}\nabla u)&=&f,\quad \text{in } \Omega,\label{model-1}\\
u&=&g,\quad \text{on } \partial\Omega\setminus\Gamma,\label{model-2}\\
{[[u]]}_{\Gamma}=u|_{\Omega_1}-u|_{\Omega_2}&=&g_D,\quad \text{on } \Gamma,\label{model-3}\\
{[[{\color{black}{a}}\nabla u\cdot \bn]]}_{\Gamma}
={\color{black}{a_1}}\nabla u|_{\Omega_1}\cdot \bn_1
+{\color{black}{a_2}}\nabla u|_{\Omega_2}\cdot \bn_2
&=&g_N,\quad \text{on } \Gamma,\label{model-4}
\end{eqnarray}
where $\Omega\subset\mathbb{R}^2$ is a bounded domain decomposed as
$\Omega=\Omega_1\cup\Omega_2$, with interface $\Gamma=\Omega_1\cap\Omega_2$.
Here, $a_i=a|_{\Omega_i}$ for $i=1,2$, and $\bn_1$ and $\bn_2$ denote the unit outward normal vectors to $\Gamma$ with respect to $\Omega_1$ and $\Omega_2$, respectively. The diffusion tensor $a$ is assumed to be symmetric and uniformly positive definite in $\Omega$. For simplicity of presentation, $a$ is taken to be a constant matrix; however, all theoretical results extend in a straightforward manner to sufficiently smooth variable coefficients.

\medskip

A weak formulation of \eqref{model-1}--\eqref{model-4} reads as follows: find $u\in H^1(\Omega)$ such that $u=g$ on $\partial\Omega\setminus\Gamma$, ${[[u]]}_{\Gamma}=g_D$ on $\Gamma$, and
\begin{equation}\label{weak-formula}
({\color{black}{a}}\nabla u,\nabla v)=(f,v)+\langle g_N,v\rangle_\Gamma,
\quad \forall v\in H_0^1(\Omega),
\end{equation}
where $H_0^1(\Omega)=\{v\in H^1(\Omega): v=0 \text{ on } \partial\Omega\}$.

Elliptic interface problems arise in numerous scientific and engineering applications, including biological systems \cite{KLL2009}, material science \cite{HLOZ1997}, fluid dynamics \cite{AL2009}, and computational electromagnetics \cite{JH2003,CCCGW2011}. These problems are characterized by discontinuous coefficients across interfaces, which often induce reduced regularity of the solution, such as jumps or strong gradients near the interface. This lack of smoothness presents substantial challenges for the construction and analysis of accurate and robust numerical methods, particularly of high order.

A variety of numerical methods have been proposed to address elliptic interface problems. Broadly, these approaches can be divided into unfitted mesh methods and interface-fitted mesh methods. Unfitted mesh methods allow the computational mesh to be generated independently of the interface geometry, thereby avoiding complex mesh generation when interfaces are intricate or time dependent. One class of unfitted methods modifies basis functions locally near the interface to incorporate jump conditions directly into the approximation space. Representative examples include immersed interface methods \cite{ZL1998,JWCL2022, CCGL2022}, ghost fluid methods \cite{LS2003}, and hybridizable discontinuous Galerkin methods \cite{DWX2017,HCWX2020}.

Another class of unfitted methods enforces interface conditions weakly through penalty or stabilization terms. This category includes extended finite element methods \cite{XXW2020,CCW2022}, unfitted finite element methods \cite{HH2002}, cut finite element methods \cite{BCHLM2015}, and high-order hybridizable discontinuous Galerkin methods \cite{HNPK2013}. Although unfitted methods have demonstrated considerable flexibility, their analysis becomes increasingly intricate for complex interface geometries, and rigorous convergence results—especially for high-order schemes—remain challenging.

Interface-fitted mesh methods, by contrast, align the computational mesh with the interface and are well suited for problems involving complex geometries, irregular elements, or hanging nodes. Typical examples include discontinuous Galerkin methods \cite{HNPK2013,LW2022,WGC2022}, the matched interface and boundary method \cite{YW2007,ZZFW2006}, virtual element methods \cite{CWW2017}, and weak Galerkin finite element methods. The weak Galerkin method, originally introduced in \cite{ellip_JCAM2013} and further developed in \cite{wg1,wg2,wg3,wg4,wg5,wg6,wg7,wg8,wg9,wg10,wg11,wg12,wg13,wg14,wg15,wg16,wg17,wg18,wg19,wg20,wg21,itera,wz2023,wy3655}, is based on the use of weak derivatives and weak continuity, leading to schemes that are stable and highly adaptable to general meshes.

An important extension of the WG framework is the primal-dual weak Galerkin (PDWG) method \cite{pdwg1,pdwg2,pdwg3,pdwg4,pdwg5,pdwg6,pdwg7,pdwg8,pdwg9,pdwg10,pdwg11,pdwg12,pdwg13,pdwg14,pdwg15}. In the PDWG approach, numerical approximations are formulated as constrained minimization problems, with the governing equations enforced through weak constraints. The resulting Euler--Lagrange systems involve both primal and dual variables and possess favorable symmetry and stability properties.

In this work, we propose a simplified weak Galerkin finite element method for elliptic interface problems on both convex and nonconvex polygonal partitions. The method incorporates an intrinsic stabilization mechanism and does not require explicitly added stabilizer terms, which are commonly employed in classical WG formulations. In contrast to existing stabilizer-free WG methods \cite{ye}, the proposed scheme applies to nonconvex polygonal meshes and admits flexible choices of polynomial degrees. The essential analytical tool is the construction of suitable bubble functions, which enables stability without additional stabilization terms. Although this approach necessitates higher-degree polynomial spaces for computing discrete weak gradients, it facilitates the development of auto-stabilized WG methods on nonconvex elements.

The resulting scheme preserves the size and global sparsity pattern of the stiffness matrix, thereby significantly simplifying implementation relative to stabilizer-dependent WG methods. A rigorous error analysis demonstrates optimal-order convergence of the WG approximation in the discrete $H^1$ norm. These results confirm that the proposed auto-stabilized WG method provides an efficient and theoretically sound framework for elliptic interface problems on nonconvex polygonal meshes.

The remainder of this paper is organized as follows. Section~2 introduces the weak gradient operator and its discrete counterpart. Section~3 presents the WG discretization for elliptic interface problems on convex and nonconvex polygonal partitions. Section~4 establishes existence and uniqueness of the discrete solution. An error equation is derived in Section~5, followed by optimal-order error estimates in the discrete $H^1$ norm in Section~6. Numerical results validating the theoretical analysis are reported in Section~7.

Throughout this paper, standard Sobolev space notation is adopted as in \cite{GT1983}. Let $D\subset\mathbb{R}^2$ be a bounded domain with Lipschitz boundary $\partial D$. The symbols $(\cdot,\cdot)_{s,D}$, $|\cdot|_{s,D}$, and $\|\cdot\|_{s,D}$ denote the inner product, seminorm, and norm in $H^s(D)$ for integers $s\ge0$. When $s=0$, we write $(\cdot,\cdot)_D$ and $\|\cdot\|_D$. If $D=\Omega$, the subscript is omitted.

\section{Weak Gradient and Discrete Weak Gradient}\label{Section:Hessian}

In this section, we briefly review the definition of the weak gradient and its discrete counterpart, following \cite{wy3655, wang}.

Let $T$ be a polygonal element with boundary $\partial T$. A \emph{weak function} on $T$ is defined as
$v=\{v_0,v_b\}$, where $v_0\in L^2(T)$ and $v_b\in L^2(\partial T)$. The components $v_0$ and $v_b$ represent the values of $v$ in the interior of $T$ and on $\partial T$, respectively. In general, $v_b$ is treated as an independent variable and is not required to coincide with the trace of $v_0$. In the special case $v_b=v_0|_{\partial T}$, the weak function $v=\{v_0,v_b\}$ is uniquely determined by $v_0$ and may be identified with $v_0$.

Denote by $W(T)$ the space of all weak functions on $T$, namely,
\begin{equation}\label{2.1}
W(T)=\{v=\{v_0,v_b\}: v_0\in L^2(T),\; v_b\in L^2(\partial T)\}.
\end{equation}

The weak gradient, denoted by $\nabla_w$, is defined as a linear operator from $W(T)$ into the dual space of $[H^1(T)]^2$. For any $v\in W(T)$, the weak gradient $\nabla_w v$ is the bounded linear functional satisfying
\begin{equation}\label{2.3}
(\nabla_w v,\bvarphi)_T
=-(v_0,\nabla\cdot\bvarphi)_T
+\langle v_b,\bvarphi\cdot\bn\rangle_{\partial T},
\quad \forall \bvarphi\in[H^1(T)]^2,
\end{equation}
where $\bn$ denotes the unit outward normal vector to $\partial T$.

Let $r\ge0$ be an integer and let $P_r(T)$ denote the space of polynomials on $T$ of total degree at most $r$. The discrete weak gradient on $T$, denoted by $\nabla_{w,r,T}$, is a linear operator from $W(T)$ to $[P_r(T)]^2$. For any $v\in W(T)$, the discrete weak gradient $\nabla_{w,r,T}v$ is the unique polynomial vector in $[P_r(T)]^2$ satisfying
\begin{equation}\label{2.4}
(\nabla_{w,r,T} v,\bvarphi)_T
=-(v_0,\nabla\cdot\bvarphi)_T
+\langle v_b,\bvarphi\cdot\bn\rangle_{\partial T},
\quad \forall \bvarphi\in[P_r(T)]^2.
\end{equation}
If $v_0\in H^1(T)$ is sufficiently smooth, then applying integration by parts to the first term on the right-hand side of \eqref{2.4} yields
\begin{equation}\label{2.4new}
(\nabla_{w,r,T} v,\bvarphi)_T
=(\nabla v_0,\bvarphi)_T
+\langle v_b-v_0,\bvarphi\cdot\bn\rangle_{\partial T},
\quad \forall \bvarphi\in[P_r(T)]^2.
\end{equation}
\section{Weak Galerkin Algorithms}\label{Section:WGFEM}

Let ${\mathcal T}_h$ be a finite element partition of the domain $\Omega\subset\mathbb{R}^2$ into polygonal elements, assumed to be shape regular in the sense of \cite{wy3655}. Denote by ${\mathcal E}_h$ the set of all edges in ${\mathcal T}_h$ and by ${\mathcal E}_h^0={\mathcal E}_h\setminus\partial\Omega$ the set of all interior edges. For each $T\in{\mathcal T}_h$, let $h_T$ denote the diameter of $T$, and define the mesh size $h=\max_{T\in{\mathcal T}_h}h_T$.

Let $k\ge0$ and $q\ge0$ be integers satisfying $k\ge q$. For each element $T\in{\mathcal T}_h$, define the local weak finite element space by
\begin{equation}
V(k,q,T)=\{\{v_0,v_b\}: v_0\in P_k(T),\; v_b\in P_q(e),\ e\subset\partial T\}.
\end{equation}
By patching these local spaces together through a single-valued boundary component $v_b$ on each interior edge in ${\mathcal E}_h^0$, we obtain the global weak finite element space
\[
V_h=\big\{\{v_0,v_b\}:\ \{v_0,v_b\}|_T\in V(k,q,T),\ \forall T\in{\mathcal T}_h\big\}.
\]
Let $V_h^0$ denote the subspace of $V_h$ consisting of functions with vanishing boundary values on $\partial\Omega$, namely,
\[
V_h^0=\{v\in V_h:\ v_b=0 \ \text{on}\ \partial\Omega\}.
\]

For notational simplicity, we write $\nabla_w v$ to denote the discrete weak gradient computed elementwise by \eqref{2.4}; that is,
\[
(\nabla_w v)|_T=\nabla_{w,r,T}(v|_T),
\qquad \forall T\in{\mathcal T}_h.
\]

For any edge $e\subset\Gamma$ shared by two adjacent elements $T_1\subset\Omega_1$ and $T_2\subset\Omega_2$, define the jump of $v_b$ across $e$ by
\[
[[v_b]]_e
= v_b|_{\partial T_1\cap\Gamma}
- v_b|_{\partial T_2\cap\Gamma}.
\]

For $v,w\in V_h$, define the bilinear form
\[
a(v,w)=\sum_{T\in{\mathcal T}_h}({\color{black}{a}}\nabla_w v,\nabla_w w)_T.
\]
Let $Q_b$ denote the $L^2$ projection onto $P_q(e)$ on each edge $e$.

\begin{WG}\label{PDWG1}
A weak Galerkin scheme for the weak formulation \eqref{weak-formula} of the interface problem \eqref{model-1}--\eqref{model-4} seeks $u_h=\{u_0,u_b\}\in V_h$ such that
$u_b=Q_b g$ on $\partial\Omega\setminus\Gamma$ and
${[[u_b]]}_{\Gamma}=Q_b g_D$, and satisfies
\begin{equation}\label{wg}
\sum_{T\in{\mathcal T}_h}({\color{black}{a}}\nabla_w u_h,\nabla_w v)_T
=(f,v_0)+\sum_{e\subset\Gamma}\langle g_N,v_b\rangle_e,
\quad \forall v\in V_h^0.
\end{equation}
\end{WG}

\section{Solution Existence and Uniqueness} 
 
Recall that ${\mathcal T}_h$ is a shape-regular finite element partition of the domain $\Omega$. Therefore,  for any $T\in {\mathcal T}_h$ and $\phi\in H^1(T)$,
 the following trace inequality holds true \cite{wy3655}; i.e.,
\begin{equation}\label{tracein}
 \|\phi\|^2_{\partial T} \leq C(h_T^{-1}\|\phi\|_T^2+h_T \|\nabla \phi\|_T^2).
\end{equation}
If $\phi$ is a polynomial on the element $T\in {\mathcal T}_h$,  the following trace inequality holds true \cite{wy3655}; i.e.,
\begin{equation}\label{trace}
\|\phi\|^2_{\partial T} \leq Ch_T^{-1}\|\phi\|_T^2.
\end{equation}

For any $v=\{v_0, v_b\}\in V_h$, we define
the following discrete energy norm \begin{equation}\label{3norm}
\3bar v\3bar=\Big( \sum_{T\in {\mathcal T}_h} (a\nabla_w v, \nabla_w v)_T\Big)^{\frac{1}{2}},
\end{equation}
and the following discrete $H^1$ semi-norm 
\begin{equation}\label{disnorm}
\|v\|_{1, h}=\Big( \sum_{T\in {\mathcal T}_h} (a\nabla v_0, \nabla v_0)_T^2+h_T^{-1}\|v_0-v_b\|_{\partial T}^2\Big)^{\frac{1}{2}}.
\end{equation}
\begin{lemma}\cite{wang}\label{norm1}
 For $v=\{v_0, v_b\}\in V_h$, there exists a constant $C$ such that
 $$
 \|\nabla v_0\|_T\leq C\|\nabla_w v\|_T.
 $$
\end{lemma}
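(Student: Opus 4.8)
The plan is to exploit the integration-by-parts form \eqref{2.4new} of the discrete weak gradient together with a judiciously chosen polynomial test function built from a bubble function. Since $v_0\in P_k(T)$, its gradient $\nabla v_0$ lies in $[P_{k-1}(T)]^2$, so it is an admissible test direction once the degree $r$ used in $\nabla_{w,r,T}$ is large enough. The key idea is to introduce a scalar bubble function $\Phi_T$ that is nonnegative on $T$, vanishes on $\partial T$, and is normalized so that $0\le\Phi_T\le1$, and then to take $\bvarphi=\Phi_T\nabla v_0$ as the test function. For this to be legitimate one needs $\Phi_T\nabla v_0\in[P_r(T)]^2$, i.e. $r\ge\deg\Phi_T+k-1$; this is precisely why the scheme is formulated with the higher-degree weak gradient space.

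First I would substitute $\bvarphi=\Phi_T\nabla v_0$ into \eqref{2.4new}. Because $\Phi_T$ vanishes on $\partial T$, the boundary term $\langle v_b-v_0,\bvarphi\cdot\bn\rangle_{\partial T}$ drops out entirely, leaving
\begin{equation*}
(\nabla_w v,\Phi_T\nabla v_0)_T=(\nabla v_0,\Phi_T\nabla v_0)_T=\int_T\Phi_T|\nabla v_0|^2\,dx.
\end{equation*}
The upper bound $\|\Phi_T\nabla v_0\|_T\le\|\nabla v_0\|_T$ is immediate from $0\le\Phi_T\le1$, so by the Cauchy--Schwarz inequality the left-hand side is at most $\|\nabla_w v\|_T\|\nabla v_0\|_T$. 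Combining this with the weighted lower bound $\int_T\Phi_T|\nabla v_0|^2\,dx\ge C\|\nabla v_0\|_T^2$ yields $C\|\nabla v_0\|_T^2\le\|\nabla_w v\|_T\|\nabla v_0\|_T$, and dividing by $\|\nabla v_0\|_T$ gives the asserted estimate. Note that, in contrast to the naive choice $\bvarphi=\nabla v_0$, this construction never generates the jump contribution $v_0-v_b$ on $\partial T$, so neither the trace inequalities \eqref{tracein}--\eqref{trace} nor any auxiliary control of the jump is required here.

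The crux of the argument—and the step I expect to be the main obstacle—is the weighted norm equivalence
\begin{equation*}
\int_T\Phi_T|\bm{\psi}|^2\,dx\ge C\|\bm{\psi}\|_T^2,\qquad\forall\,\bm{\psi}\in[P_{k-1}(T)]^2,
\end{equation*}
with a constant $C$ independent of $h_T$ and uniform over the shape-regular partition. On a fixed reference configuration this follows from a compactness argument, since $\Phi_T$ is strictly positive in the interior and the left-hand side defines a norm on the finite-dimensional space $[P_{k-1}]^2$; the delicate point is to transfer it to physical elements with the correct scaling and, above all, to construct a single bubble $\Phi_T$ with uniform interior positivity on \emph{nonconvex} polygonal elements, where no affine map to a fixed reference cell is available. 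This is exactly the bubble-function construction advertised in the introduction, and it is what fixes the admissible range of $r$. Once this weighted inequality is secured, the estimate above follows immediately.
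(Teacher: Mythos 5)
Your proposal is correct and follows essentially the same route as the paper's (cited) proof: Lemma \ref{norm1} is quoted from \cite{wang}, where the argument is exactly this one — test \eqref{2.4new} with $\bvarphi=\Phi_T\nabla v_0$ for a bubble $\Phi_T$ vanishing on $\partial T$ so that the jump term drops, then close via Cauchy--Schwarz and the weighted norm equivalence $\int_T\Phi_T|\bvarphi|^2\,dx\geq C\|\bvarphi\|_T^2$ on the finite-dimensional polynomial space, proved by scaling and compactness under shape regularity. Your degree count $r\geq \deg\Phi_T+k-1$ is also precisely why the paper takes $r=2N+k-1$ on nonconvex polygons (bubble of degree $2N$, squares of the edge linear functions) and $r=N+k-1$ on convex ones (bubble of degree $N$).
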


\begin{lemma}\cite{wang}\label{normeqva}   There exists two positive constants $C_1$ and $C_2$ such that for any $v=\{v_0, v_b\} \in V_h$, we have
 \begin{equation}\label{normeq}
 C_1\|v\|_{1, h}\leq \3bar v\3bar  \leq C_2\|v\|_{1, h}.
\end{equation}
\end{lemma}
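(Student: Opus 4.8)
The plan is to establish both inequalities in \eqref{normeq} from a single element-wise two-sided estimate
\[
\|\nabla_w v\|_T^2 \;\simeq\; \|\nabla v_0\|_T^2 + h_T^{-1}\|v_0-v_b\|_{\partial T}^2,
\qquad \forall T\in\mathcal T_h,
\]
after which summation over $T$ and the uniform spectral bounds on the symmetric positive definite coefficient $a$ (which let me pass freely between $(a\,\cdot,\cdot)_T$ and $\|\cdot\|_T^2$ in both \eqref{3norm} and \eqref{disnorm}) deliver the result. It therefore suffices to produce matching upper and lower bounds for $\|\nabla_w v\|_T$ on each element.

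For the upper bound I would take $\bvarphi=\nabla_w v\in[P_r(T)]^2$ as the test function in \eqref{2.4new}, giving
\[
\|\nabla_w v\|_T^2=(\nabla v_0,\nabla_w v)_T+\langle v_b-v_0,\nabla_w v\cdot\bn\rangle_{\partial T}.
\]
The first term is bounded by $\|\nabla v_0\|_T\|\nabla_w v\|_T$ via Cauchy--Schwarz. For the boundary term I apply Cauchy--Schwarz on $\partial T$ followed by the polynomial trace inequality \eqref{trace} applied to $\nabla_w v$, yielding $\langle v_b-v_0,\nabla_w v\cdot\bn\rangle_{\partial T}\le C h_T^{-1/2}\|v_0-v_b\|_{\partial T}\|\nabla_w v\|_T$. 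Dividing through by $\|\nabla_w v\|_T$ gives $\|\nabla_w v\|_T\le \|\nabla v_0\|_T+Ch_T^{-1/2}\|v_0-v_b\|_{\partial T}$; squaring and summing then produces $\3bar v\3bar\le C_2\|v\|_{1,h}$.

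For the lower bound, the gradient contribution $\|\nabla v_0\|_T\le C\|\nabla_w v\|_T$ is exactly Lemma~\ref{norm1}, so the real work is to dominate the jump $h_T^{-1}\|v_0-v_b\|_{\partial T}^2$ by $\|\nabla_w v\|_T^2$. Writing $\zeta=v_0-v_b$ on $\partial T$, my plan is to construct a test vector $\bvarphi\in[P_r(T)]^2$ whose normal trace reproduces $\zeta$ edge by edge, namely an edge-bubble combination $\bvarphi=\sum_{e\subset\partial T}B_e\,\widetilde\zeta_e\,\bn_e$, where $B_e$ is a polynomial bubble that is positive on the interior of $e$ and vanishes on $\partial T\setminus e$, $\widetilde\zeta_e$ is a polynomial extension of $\zeta|_e$ to $T$, and $\bn_e$ is the (constant) outward normal on $e$. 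Because $B_e$ annihilates every edge but $e$, one has $\bvarphi\cdot\bn|_e=B_e\zeta|_e$, so that (up to a choice of sign) $\langle v_b-v_0,\bvarphi\cdot\bn\rangle_{\partial T}=\sum_e\int_e B_e\zeta^2\ge c\|\zeta\|_{\partial T}^2$ by equivalence of norms on the finite-dimensional polynomial spaces, while the edge-to-element extension and a standard scaling argument give $\|\bvarphi\|_T\le Ch_T^{1/2}\|\zeta\|_{\partial T}$. Feeding this $\bvarphi$ into \eqref{2.4new}, rearranging, and bounding the right-hand side by $\big(\|\nabla_w v\|_T+\|\nabla v_0\|_T\big)\|\bvarphi\|_T\le C\|\nabla_w v\|_T\|\bvarphi\|_T$ (again invoking Lemma~\ref{norm1}), I obtain $c\|\zeta\|_{\partial T}^2\le C\,h_T^{1/2}\|\nabla_w v\|_T\|\zeta\|_{\partial T}$, hence $h_T^{-1}\|v_0-v_b\|_{\partial T}^2\le C\|\nabla_w v\|_T^2$, which finishes the lower bound and hence $C_1\|v\|_{1,h}\le\3bar v\3bar$.

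I expect the main obstacle to be the bubble construction on nonconvex polygonal elements: on a general, possibly reentrant, polygon the classical triangular edge-bubble $\lambda_i\lambda_j$ is not directly available, so $B_e$ must be fabricated so as to vanish on all other edges while remaining uniformly bounded below on a fixed fraction of $e$, and both the extension $\widetilde\zeta_e$ and the scaling $\|\bvarphi\|_T\le Ch_T^{1/2}\|\zeta\|_{\partial T}$ must hold with constants depending only on the shape-regularity parameters. It is precisely to accommodate these bubbles that the degree $r$ of the discrete weak gradient must be chosen large enough for $[P_r(T)]^2$ to contain the products $B_e\widetilde\zeta_e\bn_e$; verifying this degree count and the uniform positivity and scaling of the bubbles (the ingredient borrowed from \cite{wang}) is the crux of the argument.
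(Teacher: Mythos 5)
Your proposal is correct and follows essentially the same route as the paper's treatment: the paper states this lemma without proof, citing \cite{wang}, and the argument there is precisely the one you outline --- the upper bound by testing \eqref{2.4new} with $\bvarphi=\nabla_w v$ together with the polynomial trace inequality \eqref{trace}, and the lower bound by combining Lemma~\ref{norm1} with edge-bubble test functions, whose required degree is what dictates the choices $r=N+k-1$ (convex) and $r=2N+k-1$ (nonconvex) recorded in Section~5. The bubble construction, its uniform positivity, and the scaling $\|\bvarphi\|_T\leq Ch_T^{1/2}\|v_0-v_b\|_{\partial T}$ that you flag as the crux are exactly the ingredients the paper delegates to \cite{wang}, so nothing is missing relative to the paper's own presentation.
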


\begin{theorem}
The  WG Algorithm  \ref{PDWG1} has  a unique solution. 
\end{theorem}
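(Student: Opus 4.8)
The plan is to reduce the claim to a statement about the homogeneous problem. Since Weak Galerkin Algorithm \ref{PDWG1} amounts, after the essential boundary and interface data are absorbed into a particular function, to a square linear system posed on the finite-dimensional space $V_h^0$, existence and uniqueness are equivalent; hence it suffices to prove that the only solution of the associated homogeneous problem (with $f=0$, $g=0$, $g_D=0$, and $g_N=0$) is the trivial one.

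First I would let $u_h^{(1)}$ and $u_h^{(2)}$ be two solutions and set $v=u_h^{(1)}-u_h^{(2)}$. Because both carry the same data, the difference satisfies $v_b=0$ on $\partial\Omega$ and $[[v_b]]_\Gamma=0$ on $\Gamma$, so that $v\in V_h^0$ and $v$ is an admissible test function. Subtracting the two instances of \eqref{wg} and then choosing the test function to be $v$ itself yields $\sum_{T\in{\mathcal T}_h}(a\nabla_w v,\nabla_w v)_T=0$, which by the definition \eqref{3norm} of the energy norm means $\3bar v\3bar=0$.

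Next I would invoke the norm equivalence \eqref{normeq} of Lemma \ref{normeqva} to conclude $\|v\|_{1,h}=0$. Reading off the definition \eqref{disnorm} and using that $a$ is uniformly positive definite, this forces, on every element $T\in{\mathcal T}_h$, both $\nabla v_0=0$ and $v_0=v_b$ on $\partial T$. The first identity says $v_0$ is constant on each element, while the second transfers that constant to the boundary component $v_b$.

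Finally I would propagate these constants to a single global constant and pin it down. On each interior edge not lying on $\Gamma$, single-valuedness of $v_b$ forces the constants of the two neighbouring elements to agree; on each interface edge the same conclusion follows from $[[v_b]]_\Gamma=0$; and on each boundary edge $v_0=v_b=0$. Since the mesh is connected, $v_0\equiv 0$, whence $v_b\equiv 0$ as well, i.e. $v=0$. This gives uniqueness and therefore existence. The only step demanding care is this last propagation argument, where one must treat the interface edges correctly so that the piecewise-constant $v_0$ is genuinely forced to be globally constant and then zero; the remainder of the argument is routine once Lemma \ref{normeqva} is available.
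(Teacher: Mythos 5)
Your proof is correct and follows essentially the same route as the paper's: form the difference of two solutions, test the homogeneous equation with that difference to conclude the energy norm \eqref{3norm} vanishes, invoke the norm equivalence \eqref{normeq} of Lemma \ref{normeqva} to get elementwise constants with $v_0=v_b$ on each $\partial T$, and propagate these constants to zero using single-valuedness of $v_b$ and the boundary condition. Your two additions --- observing explicitly that existence follows from uniqueness because the scheme is a square finite-dimensional linear system, and spelling out the propagation across interface edges via $[[v_b]]_\Gamma=0$ --- are sensible refinements of points the paper leaves implicit, not a different argument.
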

\begin{proof}
Assume $u_h^{(1)}\in V_h$ such that $u_b^{(1)}=Q_bg$ on $\pa\O\backslash \Gamma$, ${[[u_b^{(1)}]]}_{\Gamma}=Q_bg_D$  and $u_h^{(2)}\in V_h$ such that $u_b^{(2)}=Q_bg$ on $\pa\O\backslash \Gamma$, ${[[u_b^{(2)}]]}_{\Gamma}=Q_bg_D$ are two different solutions of the WG Algorithm  \ref{PDWG1}. Then, $\eta_h= u_h^{(1)}-u_h^{(2)}\in V_h^0$ satisfies
$$
\sum_{T\in {\mathcal T}_h}(a\nabla_w \eta_h, \nabla_w v)_T=0, \qquad \forall v\in V_h^0.
$$
Letting $v=\eta_h$  gives $\3bar \eta_h\3bar=0$. From \eqref{normeq} we have $\|\eta_h\|_{1,h}=0$, which yields
$\nabla \eta_0=0$ on each $T$ and $\eta_0=\eta_b$ on each $\partial T$. Using the fact that $\nabla \eta_0=0$ on each $T$ gives $\eta_0=C$ on each $T$. This, together with $\eta_0=\eta_b$ on each $\partial T$ and  $\eta_b=0$ on $\partial \Omega$,   gives $\eta_0\equiv 0$ and  further $\eta_b\equiv 0$  and $\eta_h\equiv 0$ in the domain $\Omega$. Therefore, we have $u_h^{(1)}\equiv u_h^{(2)}$. This completes the proof of this theorem.
\end{proof}
\section{Error Equations}

Let $\mathcal{T}_h$ be a partition of the domain $\Omega$. On each element $T \in \mathcal{T}_h$, let $Q_0$ be the $L^2$ projection onto $P_k(T)$. On each edge or face $e \subset \partial T$, let $Q_b$ be the $L^2$ projection operator onto $P_{q}(e)$. For any $w \in H^1(\Omega)$, we denote by $Q_h w$ the $L^2$ projection into the weak finite element space $V_h$, defined such that:
$$
(Q_h w)|_T := \{Q_0(w|_T), Q_b(w|_{\partial T})\}, \qquad \forall T \in \mathcal{T}_h.
$$
Let $r = 2N + k - 1$ for a nonconvex polygon with $N$ edges, and $r = N + k - 1$ for a convex polygon with $N$ edges. See \cite{wang} for details. We denote by $Q_r$ the $L^2$ projection operator onto the finite element space of piecewise polynomials of degree $r$.

\begin{lemma}\label{Lemma5.1} \cite{wang}
The weak gradient satisfies the following property:
\begin{equation}\label{pro}
\nabla_{w} u = Q_r \nabla u, \qquad \forall u \in H^1(T).
\end{equation}
\end{lemma}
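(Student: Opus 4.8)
The plan is to verify that both quantities, $\nabla_w u$ and $Q_r \nabla u$, are polynomial vectors lying in $[P_r(T)]^2$ that satisfy one and the same variational identity when tested against every $\bvarphi \in [P_r(T)]^2$, and then to conclude equality by uniqueness. The essential observation is that an ordinary function $u \in H^1(T)$ is to be identified with the weak function $\{v_0, v_b\} = \{u, u|_{\partial T}\}$, whose interior and boundary components agree along $\partial T$.

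First I would invoke the integration-by-parts form \eqref{2.4new} of the discrete weak gradient. This is legitimate here because $u \in H^1(T)$ possesses a well-defined trace $u|_{\partial T} \in L^2(\partial T)$ and each test field $\bvarphi \in [P_r(T)]^2 \subset [H^1(T)]^2$ is smooth. Under the identification $v_b = v_0|_{\partial T}$, the boundary contribution $\langle v_b - v_0, \bvarphi \cdot \bn \rangle_{\partial T}$ vanishes identically, so that \eqref{2.4new} collapses to
$$(\nabla_w u, \bvarphi)_T = (\nabla u, \bvarphi)_T, \qquad \forall \bvarphi \in [P_r(T)]^2.$$
Next I would recall the defining property of the $L^2$ projection $Q_r$ onto $[P_r(T)]^2$, namely $(Q_r \nabla u, \bvarphi)_T = (\nabla u, \bvarphi)_T$ for all $\bvarphi \in [P_r(T)]^2$. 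Subtracting the two identities yields $(\nabla_w u - Q_r \nabla u, \bvarphi)_T = 0$ for every $\bvarphi \in [P_r(T)]^2$. Since the difference $\nabla_w u - Q_r \nabla u$ is itself a member of $[P_r(T)]^2$, choosing $\bvarphi = \nabla_w u - Q_r \nabla u$ forces its $L^2(T)$ norm to vanish, giving $\nabla_w u = Q_r \nabla u$ as claimed.

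I do not anticipate a genuine obstacle, as the whole argument reduces to the commuting identity \eqref{2.4new} together with the characterization of $Q_r$. The only point deserving care is the justification of integration by parts for a general $H^1$ function rather than a smooth one, that is, ensuring the trace term in \eqref{2.4new} is well defined and that the formula remains valid for $v_0 \in H^1(T)$. This follows from the density of smooth functions in $H^1(T)$ and the continuity of the trace operator, so the identity extends to all of $H^1(T)$ by a routine limiting argument.
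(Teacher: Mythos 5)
Your proof is correct. Note that the paper itself does not prove this lemma but cites it from the reference \cite{wang}; your argument is precisely the standard one used there: identify $u\in H^1(T)$ with the weak function $\{u,u|_{\partial T}\}$ so that the boundary term in \eqref{2.4new} vanishes, observe that $\nabla_w u$ and $Q_r\nabla u$ then satisfy the same variational identity against all $\bvarphi\in[P_r(T)]^2$, and conclude equality since both lie in $[P_r(T)]^2$. Your added remark on justifying the trace/integration-by-parts step for general $H^1$ functions is a legitimate point of care and is handled correctly by density.
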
 

Let $u$ and $u_h$ be the exact solution of the elliptic interface problem \eqref{model-1}--\eqref{model-4} and its numerical approximation arising from the WG Algorithm \ref{PDWG1}, respectively. We define the error function, denoted by $e_h$, as:
\begin{equation}\label{error}
e_h = u - u_h.
\end{equation}

\begin{lemma}\label{errorequa}
The error function $e_h$ defined in \eqref{error} satisfies the following error equation:
\begin{equation}\label{erroreqn}
\sum_{T \in \mathcal{T}_h} (a \nabla_w e_h, \nabla_w v)_T = \sum_{T \in \mathcal{T}_h} \langle (I - Q_r) a \nabla u \cdot \bn, v_0 - v_b \rangle_{\partial T}, \qquad \forall v \in V_h^0.
\end{equation}
\end{lemma}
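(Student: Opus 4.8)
The plan is to expand $\nabla_w e_h=\nabla_w u-\nabla_w u_h$ by linearity of the discrete weak gradient, so that
\[
\sum_{T\in\mathcal T_h}(a\nabla_w e_h,\nabla_w v)_T=\sum_{T\in\mathcal T_h}(a\nabla_w u,\nabla_w v)_T-\sum_{T\in\mathcal T_h}(a\nabla_w u_h,\nabla_w v)_T .
\]
The second sum equals the right-hand side of the scheme \eqref{wg}, namely $(f,v_0)+\sum_{e\subset\Gamma}\langle g_N,v_b\rangle_e$, by the scheme itself. Hence everything reduces to evaluating $\sum_T(a\nabla_w u,\nabla_w v)_T$ and showing it equals $(f,v_0)+\sum_{e\subset\Gamma}\langle g_N,v_b\rangle_e$ plus the desired projection-error term; subtracting then yields \eqref{erroreqn}.

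For the local evaluation I would first use Lemma \ref{Lemma5.1}, i.e.\ $\nabla_w u=Q_r\nabla u$, together with the fact that $a$ is constant on each $T$ (so $a\nabla_w v\in[P_r(T)]^2$) and the $L^2$-projection property of $Q_r$, to replace $(a\nabla_w u,\nabla_w v)_T$ by $(a\nabla u,\nabla_w v)_T$. Then I would apply the identity \eqref{2.4new} with the polynomial test vector $\bvarphi=Q_r(a\nabla u)\in[P_r(T)]^2$ (legitimate since $v_0\in P_k(T)\subset H^1(T)$), using $(\nabla_w v,Q_r(a\nabla u))_T=(a\nabla u,\nabla_w v)_T$ and $(\nabla v_0,Q_r(a\nabla u))_T=(a\nabla u,\nabla v_0)_T$ (the latter because $\nabla v_0\in[P_{k-1}(T)]^2\subset[P_r(T)]^2$), to obtain
\[
(a\nabla u,\nabla_w v)_T=(a\nabla u,\nabla v_0)_T+\langle v_b-v_0,\,Q_r(a\nabla u)\cdot\bn\rangle_{\partial T}.
\]

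Next I would integrate the first term by parts on each element and invoke the strong equation \eqref{model-1} in the form $-\nabla\cdot(a\nabla u)=f$, which holds elementwise because the partition is interface-fitted and $u$ is smooth on each $T$, giving $(a\nabla u,\nabla v_0)_T=(f,v_0)_T+\langle a\nabla u\cdot\bn,v_0\rangle_{\partial T}$. Splitting $v_0=(v_0-v_b)+v_b$ in this boundary term and recombining with the $Q_r$ term produces the clean local identity
\[
(a\nabla u,\nabla_w v)_T=(f,v_0)_T+\langle (I-Q_r)(a\nabla u)\cdot\bn,\,v_0-v_b\rangle_{\partial T}+\langle a\nabla u\cdot\bn,\,v_b\rangle_{\partial T}.
\]
After summing over $T$, the first two terms already match the target, and the remaining task is to show $\sum_T\langle a\nabla u\cdot\bn,v_b\rangle_{\partial T}=\sum_{e\subset\Gamma}\langle g_N,v_b\rangle_e$.

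I expect this last reorganization of the global edge sum to be the main obstacle, since it is where all the interface and boundary information enters. I would regroup the element-boundary integrals edge by edge and argue as follows: on boundary edges $v_b=0$ because $v\in V_h^0$; on interior edges lying inside $\Omega_1$ or $\Omega_2$ the normal flux $a\nabla u\cdot\bn$ is continuous (as $u$ is smooth there and $a$ is constant), so the two contributions, carrying opposite outward normals and sharing a single-valued $v_b$, cancel; and on each interface edge $e\subset\Gamma$ the two one-sided contributions combine, via the transmission condition \eqref{model-4}, into $\langle a_1\nabla u\cdot\bn_1+a_2\nabla u\cdot\bn_2,\,v_b\rangle_e=\langle g_N,v_b\rangle_e$. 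The delicate point to state carefully is the status of $v_b$ on $\Gamma$ (a single-valued test trace), which is precisely what lets the paired one-sided fluxes assemble into the prescribed jump $g_N$; this, together with the elementwise regularity of $u$ needed for the integration by parts, is the step requiring the most care.
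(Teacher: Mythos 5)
Your proof is correct and follows essentially the same route as the paper's: Lemma \ref{Lemma5.1} together with the substitution $\bvarphi=Q_r(a\nabla u)$ in \eqref{2.4new}, elementwise integration by parts with the strong equation, reorganization of the flux terms using the single-valuedness of $v_b$, the homogeneous boundary values of $v_b$, and the jump condition \eqref{model-4}, and finally subtraction of the scheme \eqref{wg}. If anything, your edge-by-edge cancellation argument for $\sum_{T}\langle a\nabla u\cdot\bn, v_b\rangle_{\partial T}$ is spelled out more carefully than the paper's condensed identity $\sum_{T}\langle a\nabla u\cdot\bn, v_0-v_b\rangle_{\partial T}=\langle g_N, v_0-v_b\rangle_{\Gamma}$, whose $v_0$ part only makes sense once the terms are grouped exactly as you do.
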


\begin{proof}
Using \eqref{pro} and setting $\bvarphi = Q_r (a \nabla u)$ in \eqref{2.4new}, we obtain:
\begin{equation}\label{54}
\begin{split}
&\sum_{T \in \mathcal{T}_h} (a \nabla_w u, \nabla_w v)_T \\
=&\sum_{T \in \mathcal{T}_h} (Q_r (a \nabla u), \nabla_w v)_T \\
=&\sum_{T \in \mathcal{T}_h}   (Q_r (a \nabla u), \nabla v_0)_T + \langle Q_r(a \nabla u) \cdot \bn, v_b - v_0 \rangle_{\partial T}  \\
=&\sum_{T \in \mathcal{T}_h}  (a \nabla u, \nabla v_0)_T + \langle Q_r(a \nabla u) \cdot \bn, v_b - v_0 \rangle_{\partial T}  \\
=&\sum_{T \in \mathcal{T}_h} (f, v_0)_T + \langle g_N, v_0 \rangle_{\Gamma} - \langle a \nabla u \cdot \bn, v_0 - v_b \rangle_{\partial T} \\
&\quad + \langle (I - Q_r)(a \nabla u \cdot \bn), v_0 - v_b \rangle_{\partial T} \\
=&\sum_{T \in \mathcal{T}_h} (f, v_0)_T + \langle g_N, v_0 \rangle_{\Gamma} - \langle g_N, v_0 - v_b \rangle_{\Gamma} \\
&\quad + \langle (I - Q_r)(a \nabla u \cdot \bn), v_0 - v_b \rangle_{\partial T} \\
=&\sum_{T \in \mathcal{T}_h} (f, v_0)_T + \sum_{T \in \mathcal{T}_h} \langle (I - Q_r)(a \nabla u \cdot \bn), v_0 - v_b \rangle_{\partial T} + \sum_{e \subset \Gamma} \langle g_N, v_b \rangle_e,
\end{split}
\end{equation}
where we used \eqref{weak-formula} and it follows from \eqref{model-4} that
$$
\sum_{T \in \mathcal{T}_h} \langle a \nabla u \cdot \bn, v_0 - v_b \rangle_{\partial T} = \langle [[a \nabla u \cdot \bn]]_{\Gamma}, v_0 - v_b \rangle_{\Gamma}=\langle g_N, v_0 - v_b \rangle_{\Gamma}.
$$

Subtracting \eqref{wg} from \eqref{54} yields:
$$
\sum_{T \in \mathcal{T}_h} (a \nabla_w e_h, \nabla_w v)_T = \sum_{T \in \mathcal{T}_h} \langle (I - Q_r) a \nabla u \cdot \bn, v_0 - v_b \rangle_{\partial T}.
$$
This completes the proof.
\end{proof}
 
\section{Error Estimates in $H^1$}

\begin{lemma}\cite{wy3655}
Let $\mathcal{T}_h$ be a finite element partition of the domain $\Omega$ satisfying the shape regularity assumption specified in \cite{wy3655}. For any $0\leq s \leq 1$, $0\leq n \leq k$, and $0\leq m \leq r$, there holds:
\begin{align}
\label{error1}
\sum_{T\in \mathcal{T}_h} h_T^{2s} \|\nabla u - Q_r \nabla u\|^2_{s,T} &\leq C h^{2m} \|u\|^2_{m+1}, \\
\label{error2}
\sum_{T\in \mathcal{T}_h} h_T^{2s} \|u - Q_0 u\|^2_{s,T} &\leq C h^{2n+2} \|u\|^2_{n+1}.
\end{align}
\end{lemma}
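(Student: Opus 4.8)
The plan is to reduce both estimates to purely local (element-wise) polynomial approximation bounds for the $L^2$ projections $Q_0$ and $Q_r$, and then to sum over the partition using $h_T \le h$. The engine in both cases is the same: an $L^2$ projection onto a polynomial space reproduces polynomials up to its degree, so $I - Q_0$ annihilates $P_n(T)$ whenever $n \le k$, and $I - Q_r$ annihilates $[P_m(T)]^2$ whenever $m \le r$; combining this reproduction property with the Bramble--Hilbert lemma and a scaling argument under the shape-regularity assumption of \cite{wy3655} delivers the required local estimates.

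Concretely, for \eqref{error2} I would first establish that for every $T \in \mathcal{T}_h$, every $0 \le n \le k$, and every admissible $0 \le s \le 1$,
\[
\|u - Q_0 u\|_{s,T} \le C\, h_T^{\,n+1-s}\, |u|_{n+1,T}.
\]
Squaring, multiplying by $h_T^{2s}$, and summing over $T$ then yields $\sum_T h_T^{2s}\|u - Q_0 u\|_{s,T}^2 \le C \sum_T h_T^{2n+2} |u|_{n+1,T}^2 \le C h^{2n+2} |u|_{n+1}^2 \le C h^{2n+2}\|u\|_{n+1}^2$, which is exactly \eqref{error2}. For \eqref{error1} I would apply the identical argument to the vector field $\nabla u$ and the projection $Q_r$, using the local bound
\[
\|\nabla u - Q_r \nabla u\|_{s,T} \le C\, h_T^{\,m-s}\, |\nabla u|_{m,T} \le C\, h_T^{\,m-s}\, |u|_{m+1,T},
\]
which holds for the compatible indices $s \le m$; multiplying by $h_T^{2s}$, summing, and bounding $|u|_{m+1} \le \|u\|_{m+1}$ then produces the claim.

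The main obstacle is making the Bramble--Hilbert/scaling step uniform across the entire mesh family, since the elements here are general and possibly nonconvex polygons rather than shape-regular simplices. The standard reference-element pullback is unavailable when element shapes vary, so instead I would invoke a version of the Bramble--Hilbert lemma valid on star-shaped domains --- for instance through averaged Taylor polynomials in the Dupont--Scott framework --- and then verify that the shape-regularity hypothesis of \cite{wy3655} controls the relevant chunkiness parameter uniformly, guaranteeing that the constant $C$ in the two local estimates is independent of $T$ and $h$. Once this uniformity is secured, the displayed local bounds follow and the remaining summation is routine.
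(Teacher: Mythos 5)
The paper never proves this lemma at all: it is quoted directly from \cite{wy3655}, so there is no internal proof to compare against, and your reconstruction follows exactly the standard route behind the cited result — polynomial reproduction of the $L^2$ projections $Q_0$ and $Q_r$, Bramble--Hilbert/Dupont--Scott estimates on star-shaped, shape-regular elements, and summation over the mesh using $h_T\le h$ — which covers every index combination actually used later in the paper ($m=n=k$, $s=0,1$). The one step worth spelling out is the $s=1$ case: since $Q_0$ and $Q_r$ are only $L^2$-optimal, the Bramble--Hilbert lemma gives you a quasi-best polynomial $p\in P_n(T)$ with $\|u-p\|_{s,T}\le Ch_T^{n+1-s}|u|_{n+1,T}$, but transferring this to the projection error in the $H^1$ seminorm requires the inverse inequality on polynomials, $|Q_0u-p|_{1,T}\le Ch_T^{-1}\|Q_0u-p\|_T$, together with $\|Q_0u-p\|_T\le\|u-Q_0u\|_T+\|u-p\|_T\le 2\|u-p\|_T$ (the last step using that $p\in P_k(T)$ and $Q_0$ is the $L^2$-best approximation); the generic ``scaling argument'' you invoke does not by itself control the $H^1$ seminorm of an $L^2$ projection error, so this inverse-inequality step should be stated explicitly, and it is precisely where the shape-regularity assumption of \cite{wy3655} enters a second time.
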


\begin{lemma}
Assume the exact solution $u$ of the elliptic interface problem \eqref{model-1}--\eqref{model-4} satisfies the regularity condition $u \in H^{k+1}(\Omega_i)$ for $i=1, 2$. There exists a constant $C$ such that the following estimate holds:
\begin{equation}\label{erroresti1}
\3bar u - Q_h u \3bar \leq C h^k (\|u\|_{k+1, \Omega_1} + \|u\|_{k+1, \Omega_2}).
\end{equation}
\end{lemma}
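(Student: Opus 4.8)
The plan is to estimate the energy norm directly through the defining relation \eqref{2.4new} for the discrete weak gradient, since $u-Q_hu$ is not a member of $V_h$ and the norm equivalence of Lemma \ref{normeqva} therefore cannot be invoked on it. Because $a$ is symmetric and uniformly positive definite, bounding $\3bar u-Q_hu\3bar$ reduces to controlling $\sum_{T}\|\nabla_w(u-Q_hu)\|_T^2$ elementwise. Viewing $u-Q_hu$ as the weak function $\{u-Q_0u,\,u-Q_bu\}$, whose interior component lies in $H^1(T)$, I would apply \eqref{2.4new} to get, for every $\bvarphi\in[P_r(T)]^2$,
\begin{equation*}
(\nabla_w(u-Q_hu),\bvarphi)_T=(\nabla(u-Q_0u),\bvarphi)_T+\langle Q_0u-Q_bu,\bvarphi\cdot\bn\rangle_{\partial T}.
\end{equation*}

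Next I would take $\bvarphi=\nabla_w(u-Q_hu)$, apply Cauchy--Schwarz to both terms, and control the boundary factor by the polynomial trace inequality \eqref{trace}, i.e. $\|\nabla_w(u-Q_hu)\|_{\partial T}\le Ch_T^{-1/2}\|\nabla_w(u-Q_hu)\|_T$. Dividing by $\|\nabla_w(u-Q_hu)\|_T$ yields the per-element bound
\begin{equation*}
\|\nabla_w(u-Q_hu)\|_T\le \|\nabla(u-Q_0u)\|_T+Ch_T^{-1/2}\|Q_0u-Q_bu\|_{\partial T}.
\end{equation*}
The first term is routine: squaring, summing over $T$, and invoking \eqref{error2} with $n=k$ (applied separately on $\Omega_1$ and $\Omega_2$, which is legitimate because the partition is interface-fitted and $u|_{\Omega_i}\in H^{k+1}$) delivers the target factor $Ch^{2k}(\|u\|_{k+1,\Omega_1}^2+\|u\|_{k+1,\Omega_2}^2)$.

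The main obstacle is the boundary contribution $\sum_T h_T^{-1}\|Q_0u-Q_bu\|_{\partial T}^2$, which must also be shown to be $O(h^{2k})$. I would split $Q_0u-Q_bu=(Q_0u-u)+(u-Q_bu)$ on $\partial T$. For the first piece, $u-Q_0u\in H^1(T)$, so the trace inequality \eqref{tracein} gives $\|u-Q_0u\|_{\partial T}^2\le C(h_T^{-1}\|u-Q_0u\|_T^2+h_T\|\nabla(u-Q_0u)\|_T^2)$, and \eqref{error2} bounds the right-hand side by $Ch_T^{2k+1}\|u\|_{k+1,T}^2$, whence $h_T^{-1}\|u-Q_0u\|_{\partial T}^2\le Ch_T^{2k}\|u\|_{k+1,T}^2$. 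The delicate piece is $u-Q_bu$: a naive edgewise bound yields only the order dictated by $q$, so reaching $h_T^{k+1/2}$ requires exploiting the relationship between the edge projection $Q_b$ onto $P_q(e)$ and the interior projection $Q_0$ onto $P_k(T)$. Concretely, I would rewrite $Q_0u-Q_bu=Q_b(Q_0u-u)+(I-Q_b)Q_0u$ and use the compatibility of the two projections so that the combination $Q_0u-Q_bu$ inherits the interior approximation order $h_T^{k+1/2}$ rather than the lower edgewise order, after which \eqref{tracein} closes the estimate. Establishing this projection compatibility cleanly is the crux of the argument; once it is in hand, summing the two contributions and taking square roots gives \eqref{erroresti1}.
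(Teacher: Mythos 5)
Up to the point you yourself flag as ``the crux,'' your argument is the paper's proof: the identity obtained from \eqref{2.4new}, Cauchy--Schwarz, the trace inequalities \eqref{tracein}--\eqref{trace}, and \eqref{error2} with $n=k$, $s=0,1$ applied separately on $\Omega_1$ and $\Omega_2$; working element by element and dividing by $\|\nabla_w(u-Q_hu)\|_T$ instead of summing first is an immaterial variant, as is dropping the constant matrix $a$. The problem is that all of the difficulty of the lemma is concentrated in exactly the step you defer: showing $\sum_T h_T^{-1}\|Q_0u-Q_bu\|_{\partial T}^2 = O(h^{2k})$. Announcing that this will follow from an unspecified ``compatibility'' between $Q_0$ and $Q_b$ is not a proof of it, so the proposal has a genuine gap at the decisive point.

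Moreover, the compatibility you hope for does not exist at the level of generality in which you pose it (arbitrary $k\ge q$). Writing $(I-Q_b)Q_0u=(I-Q_b)u+(I-Q_b)(Q_0u-u)$, the second term is harmless, but the first is an intrinsic edgewise projection error: for generic smooth $u$ it is of order $h_e^{q+3/2}$ in $L^2(e)$ and no better, regardless of any relation between the two projections (already $u=x^{q+1}$ with $q+1\le k$ exhibits this, since then $Q_0u=u$ while $(I-Q_b)u\ne 0$). Consequently the boundary sum is genuinely of order $h^{2q}$, and the rate $h^k$ in \eqref{erroresti1} is unattainable whenever $q<k$; the lemma implicitly requires $q\ge k$, i.e.\ $q=k$ under the paper's standing assumption $k\ge q$, which is precisely the choice used in all of the paper's numerical tests. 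In that case the ``crux'' collapses to one line, and it is what the paper uses tacitly when it bounds $\|Q_0u-Q_bu\|_{\partial T}$ by the trace norm of $Q_0u-u$: on every edge $e$ one has $Q_0u|_e\in P_k(e)\subseteq P_q(e)$, hence $Q_b(Q_0u)=Q_0u$ and
\begin{equation*}
Q_0u-Q_bu=Q_b(Q_0u-u)\ \ \text{on } e,
\qquad\text{so}\qquad
\|Q_0u-Q_bu\|_{\partial T}\le\|Q_0u-u\|_{\partial T}
\end{equation*}
by the $L^2(e)$-contraction property of $Q_b$. With this identity the troublesome piece $(I-Q_b)Q_0u$ vanishes identically, the boundary term reduces to the piece $u-Q_0u$ that you already estimated via \eqref{tracein} and \eqref{error2}, namely $h_T^{-1}\|Q_0u-Q_bu\|_{\partial T}^2\le Ch_T^{2k}\|u\|_{k+1,T}^2$, and summing over $T$ completes the proof.
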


\begin{proof}
Using \eqref{2.4new}, the Cauchy-Schwarz inequality, the trace inequalities \eqref{tracein}--\eqref{trace}, and the estimate \eqref{error2} with $n=k$ and $s=0, 1$, we have:
\begin{equation*}
\begin{split}
&\quad \ \sum_{T\in \mathcal{T}_h} (a \nabla_w(u - Q_h u), \bv)_T \\
&= \sum_{T\in \mathcal{T}_h} (a \nabla(u - Q_0 u), \bv)_T + \langle a(Q_0 u - Q_b u), \bv \cdot \bn \rangle_{\partial T} \\
&\leq \Big(\sum_{T\in \mathcal{T}_h} \|\nabla(u - Q_0 u)\|^2_T\Big)^{\frac{1}{2}} \Big(\sum_{T\in \mathcal{T}_h} \|a^{\frac{1}{2}}\bv\|_T^2\Big)^{\frac{1}{2}} \\
&\quad + \Big(\sum_{T\in \mathcal{T}_h} \|Q_0 u - Q_b u\|_{\partial T}^2\Big)^{\frac{1}{2}} \Big(\sum_{T\in \mathcal{T}_h} \|a^{\frac{1}{2}}\bv\|_{\partial T}^2\Big)^{\frac{1}{2}} \\
&\leq \Big(\sum_{T\in \mathcal{T}_h} \|\nabla(u - Q_0 u)\|_T^2\Big)^{\frac{1}{2}} \Big(\sum_{T\in \mathcal{T}_h} \|a^{\frac{1}{2}}\bv\|_T^2\Big)^{\frac{1}{2}} \\
&\quad + \Big(\sum_{T\in \mathcal{T}_h} h_T^{-1} \|Q_0 u - u\|_{T}^2 + h_T \|Q_0 u - u\|_{1,T}^2\Big)^{\frac{1}{2}} \Big(\sum_{T\in \mathcal{T}_h} C h_T^{-1} \|a^{\frac{1}{2}}\bv\|_T^2\Big)^{\frac{1}{2}} \\
&\leq C h^k (\|u\|_{k+1, \Omega_1} + \|u\|_{k+1, \Omega_2}) \Big(\sum_{T\in \mathcal{T}_h} \|a^{\frac{1}{2}}\bv\|_T^2\Big)^{\frac{1}{2}},
\end{split}
\end{equation*}
for any $\bv \in [P_r(T)]^2$.

Letting $\bv = \nabla_w(u - Q_h u)$ gives:
$$
\sum_{T\in \mathcal{T}_h} (a \nabla_w(u - Q_h u), \nabla_w(u - Q_h u))_T \leq C h^k (\|u\|_{k+1, \Omega_1} + \|u\|_{k+1, \Omega_2}) \3bar u - Q_h u \3bar.
$$
This completes the proof of the lemma.
\end{proof}

\begin{theorem}
Assume the exact solution $u$ of the elliptic interface problem \eqref{model-1}--\eqref{model-4} satisfies the regularity condition $u \in H^{k+1}(\Omega_i)$ for $i=1, 2$. There exists a constant $C$ such that the following error estimate holds:
\begin{equation}\label{trinorm}
\3bar u - u_h \3bar \leq C h^k (\|u\|_{k+1, \Omega_1} + \|u\|_{k+1, \Omega_2}).
\end{equation}
\end{theorem}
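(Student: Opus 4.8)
The plan is to estimate $\3bar u - u_h\3bar$ by comparing $u_h$ against its $L^2$ projection $Q_h u$ and exploiting the error equation \eqref{erroreqn}. First I would introduce $\xi_h=\{\xi_0,\xi_b\}:=Q_h u - u_h$ and record the triangle inequality
$$
\3bar u - u_h\3bar \le \3bar u - Q_h u\3bar + \3bar \xi_h\3bar,
$$
observing that the first term on the right is already controlled by \eqref{erroresti1}. The remaining task is therefore to bound $\3bar\xi_h\3bar$. A preliminary but essential observation is that $\xi_h\in V_h^0$: since $Q_h u=\{Q_0 u, Q_b u\}$ and $Q_b$ commutes with both the boundary trace and the jump operator across $\Gamma$, the boundary component of $Q_h u$ equals $Q_b g$ on $\partial\Omega\setminus\Gamma$ and satisfies $[[Q_b u]]_\Gamma = Q_b g_D$, exactly matching the constraints imposed on $u_h$ in WG Algorithm \ref{PDWG1}. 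Hence $\xi_h$ has vanishing boundary data and zero jump, making it an admissible test function.

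Next I would decompose $\nabla_w e_h = \nabla_w(u - Q_h u) + \nabla_w\xi_h$ and insert $v=\xi_h$ into the error equation \eqref{erroreqn}, which yields
$$
\3bar\xi_h\3bar^2 = \sum_{T\in\mathcal{T}_h}\langle (I-Q_r)a\nabla u\cdot\bn,\ \xi_0 - \xi_b\rangle_{\partial T} - \sum_{T\in\mathcal{T}_h}(a\nabla_w(u - Q_h u),\ \nabla_w\xi_h)_T.
$$
The second sum is handled immediately by the Cauchy--Schwarz inequality in the $a$-weighted inner product, bounding it by $\3bar u - Q_h u\3bar\,\3bar\xi_h\3bar$, which is $O(h^k)\cdot\3bar\xi_h\3bar$ by \eqref{erroresti1}.

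The heart of the argument, and the step I expect to be the main obstacle, is the boundary sum. I would apply the Cauchy--Schwarz inequality edgewise and then across elements, splitting the powers of $h_T$ as
$$
\sum_T \|(I-Q_r)a\nabla u\|_{\partial T}\,\|\xi_0-\xi_b\|_{\partial T} \le \Big(\sum_T h_T\|(I-Q_r)a\nabla u\|_{\partial T}^2\Big)^{\frac12}\Big(\sum_T h_T^{-1}\|\xi_0-\xi_b\|_{\partial T}^2\Big)^{\frac12}.
$$
The second factor is bounded by $\|\xi_h\|_{1,h}$, hence by $C\3bar\xi_h\3bar$ via the norm equivalence \eqref{normeq}. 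For the first factor, since $a$ is constant I would use $(I-Q_r)a\nabla u = a(\nabla u - Q_r\nabla u)$, apply the trace inequality \eqref{tracein} to convert the $\partial T$-norm into interior $H^s(T)$-norms, and invoke \eqref{error1} with $m=k$ and $s=0,1$ to obtain the bound $Ch^k(\|u\|_{k+1,\Omega_1}+\|u\|_{k+1,\Omega_2})$; here the piecewise regularity $u\in H^{k+1}(\Omega_i)$ must be tracked carefully so that the approximation estimates are applied separately on elements belonging to $\Omega_1$ and $\Omega_2$.

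Combining the two bounds gives
$$
\3bar\xi_h\3bar^2 \le C h^k\big(\|u\|_{k+1,\Omega_1}+\|u\|_{k+1,\Omega_2}\big)\,\3bar\xi_h\3bar,
$$
so that cancelling one factor of $\3bar\xi_h\3bar$ (the inequality being trivial when $\3bar\xi_h\3bar=0$) yields $\3bar\xi_h\3bar \le C h^k(\|u\|_{k+1,\Omega_1}+\|u\|_{k+1,\Omega_2})$. Returning to the opening triangle inequality and once more invoking \eqref{erroresti1} for $\3bar u - Q_h u\3bar$ then completes the proof. The only genuinely delicate points are the verification that $\xi_h\in V_h^0$ respects the jump constraint on $\Gamma$ and the correct balancing of $h_T$-weights in the boundary estimate so as to extract the optimal power $h^k$; everything else is a routine application of the projection and trace estimates collected above.
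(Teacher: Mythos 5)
Your proposal is correct and follows essentially the same route as the paper: both arguments test the error equation \eqref{erroreqn} with $v = Q_h u - u_h$, bound the consistency term on $\partial T$ via Cauchy--Schwarz with the $h_T^{\pm 1}$ weight splitting, the trace inequality \eqref{tracein}, the projection estimate \eqref{error1}, and the norm equivalence \eqref{normeq}, and control $\3bar u - Q_h u\3bar$ by \eqref{erroresti1}. The only differences are organizational: you bound the discrete error $\3bar Q_h u - u_h\3bar$ first and finish with a triangle inequality (which avoids the paper's final absorption step for $\3bar u-u_h\3bar^2$), and you make explicit the verification, left implicit in the paper, that $Q_h u - u_h$ satisfies the homogeneous boundary and interface-jump constraints and is therefore an admissible test function in $V_h^0$.
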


\begin{proof}
Consider the right-hand side of the error equation \eqref{erroreqn}. Using the Cauchy-Schwarz inequality, the trace inequality \eqref{tracein}, the estimate \eqref{error1} with $m=k$ and $s=0,1$, and the norm equivalence \eqref{normeq}, we obtain:
\begin{equation}\label{erroreqn1}
\begin{split}
&\Big| \sum_{T\in \mathcal{T}_h} \langle (I - Q_r) a \nabla u \cdot \bn, v_0 - v_b \rangle_{\partial T} \Big| \\
 \leq & C \Big( \sum_{T\in \mathcal{T}_h} \|(I - Q_r) a \nabla u \cdot \bn\|^2_T + h_T^2 \|\nabla((I - Q_r) a \nabla u \cdot \bn)\|^2_T \Big)^{\frac{1}{2}} \\
 \quad & \cdot \Big( \sum_{T\in \mathcal{T}_h} h_T^{-1} \|v_0 - v_b\|^2_{\partial T} \Big)^{\frac{1}{2}} \\
 \leq  & C h^k (\|u\|_{k+1, \Omega_1} + \|u\|_{k+1, \Omega_2}) \|v\|_{1,h} \\
 \leq & C h^k (\|u\|_{k+1, \Omega_1} + \|u\|_{k+1, \Omega_2}) \3bar v \3bar.
\end{split}
\end{equation}
Substituting \eqref{erroreqn1} into \eqref{erroreqn} gives:
\begin{equation}\label{err}
(a \nabla_w e_h, \nabla_w v) \leq C h^k (\|u\|_{k+1, \Omega_1} + \|u\|_{k+1, \Omega_2}) \3bar v \3bar.
\end{equation}

Using the Cauchy-Schwarz inequality, letting $v = Q_h u - u_h$ in \eqref{err}, and applying estimate \eqref{erroresti1}, we have:
\begin{equation*}
\begin{split}
&\quad \ \3bar u - u_h \3bar^2 \\
&= \sum_{T\in \mathcal{T}_h} (a \nabla_w (u - u_h), \nabla_w (u - Q_h u))_T + (a \nabla_w (u - u_h), \nabla_w (Q_h u - u_h))_T \\
&\leq \Big( \sum_{T\in \mathcal{T}_h} \|a^{\frac{1}{2}} \nabla_w (u - u_h)\|^2_T \Big)^{\frac{1}{2}} \Big( \sum_{T\in \mathcal{T}_h} \|a^{\frac{1}{2}} \nabla_w (u - Q_h u)\|^2_T \Big)^{\frac{1}{2}} \\
&\quad + \sum_{T\in \mathcal{T}_h} (a \nabla_w (u - u_h), \nabla_w (Q_h u - u_h))_T \\
&\leq \3bar u - u_h \3bar \3bar u - Q_h u \3bar + C h^k (\|u\|_{k+1, \Omega_1} + \|u\|_{k+1, \Omega_2})\\
  &\quad \ \cdot \Big( \sum_{T\in \mathcal{T}_h} \|a^{\frac{1}{2}} \nabla_w (Q_h u - u_h)\|^2_T \Big)^{\frac{1}{2}} \\
&\leq C \3bar u - u_h \3bar h^k (\|u\|_{k+1, \Omega_1} + \|u\|_{k+1, \Omega_2}) 
   + C h^k (\|u\|_{k+1, \Omega_1} + \|u\|_{k+1, \Omega_2}) \\
&\quad \ \cdot\Big( \sum_{T\in \mathcal{T}_h} \left( \|a^{\frac{1}{2}} \nabla_w (Q_h u - u)\|^2_T + \|a^{\frac{1}{2}} \nabla_w (u - u_h)\|^2_T \right) \Big)^{\frac{1}{2}} \\
&\leq C \3bar u - u_h \3bar h^k (\|u\|_{k+1, \Omega_1} + \|u\|_{k+1, \Omega_2}) 
  + C h^{2k} (\|u\|_{k+1, \Omega_1} + \|u\|_{k+1, \Omega_2})^2\\
&\quad \  + C h^k (\|u\|_{k+1, \Omega_1} + \|u\|_{k+1, \Omega_2}) \3bar u - u_h \3bar.
\end{split}
\end{equation*}
Simplifying the inequality gives:
\begin{equation*}
\3bar u - u_h \3bar \leq C h^k (\|u\|_{k+1, \Omega_1} + \|u\|_{k+1, \Omega_2}).
\end{equation*}
This completes the proof of the theorem.
\end{proof}

\section{Numerical experiments}
In the first test,  we solve the interface problem \eqref{model-1}--\eqref{model-4} 
   on a square domain $\Omega= (-1,1)\times (-1,1)$, where $g=0$, $g_D=0$, $g_N=0$,
\a{ a=\begin{cases} \lambda, \quad & \tx{if} \ x\le 0, \\
                     1 , \quad & \tx{if} \ 0<x, \end{cases} }
and
\a{ f=\begin{cases} -2 x^2 + (2\lambda - 2)x - 2y^2 + 2\lambda + 2, \quad & \tx{if} \ x\le 0, \\
                    (-2x^2 - 2y^2 + 2x + 2)\lambda - 2x + 2 , \quad & \tx{if} \ 0<x. \end{cases} }
The exact solution is 
\an{\label{u-1} u=\begin{cases} (1 + x)(-y^2 + 1)(1- \dfrac x \lambda), \quad & \tx{if} \ x\le 0, \\
                    (1 - x)(-y^2 + 1)(1+ \lambda x ), \quad & \tx{if} \ 0<x. \end{cases} }
We show this solution on the level 3 grid ($G_3$, shown in Figure \ref{f21}) in
      Figure \ref{fs-1}.   
This solution reflects the natural of the interface problem, i.e., $u\ne 0$ and $\partial_{\b n}\ne0$
  on the interface $\Gamma$.

\begin{figure}[H]
 \begin{center}\setlength\unitlength{1.0pt}
\begin{picture}(300,290)(0,0) 
  \put(0,-105){\includegraphics[width=300pt]{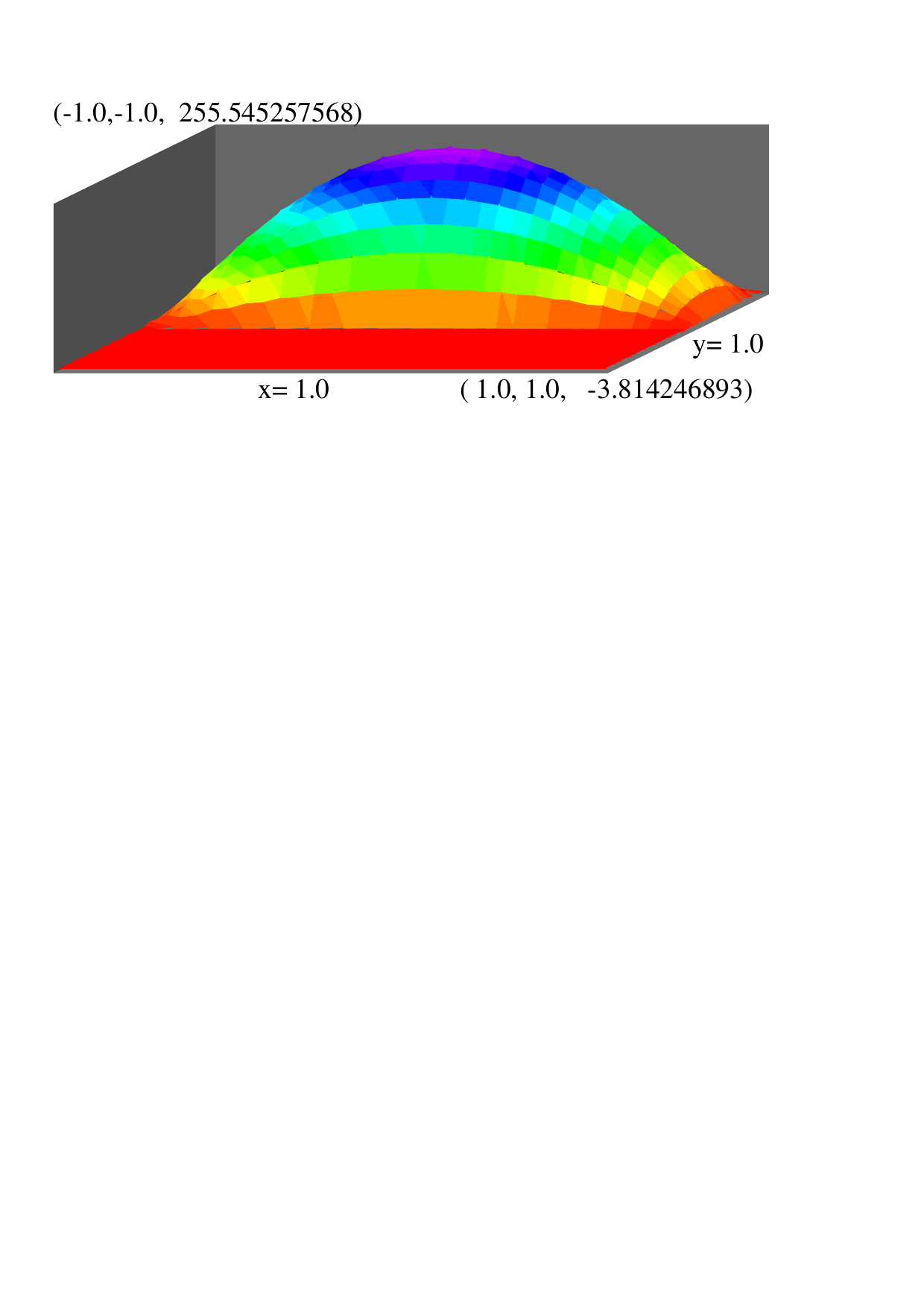}}
  \put(0,-205){\includegraphics[width=300pt]{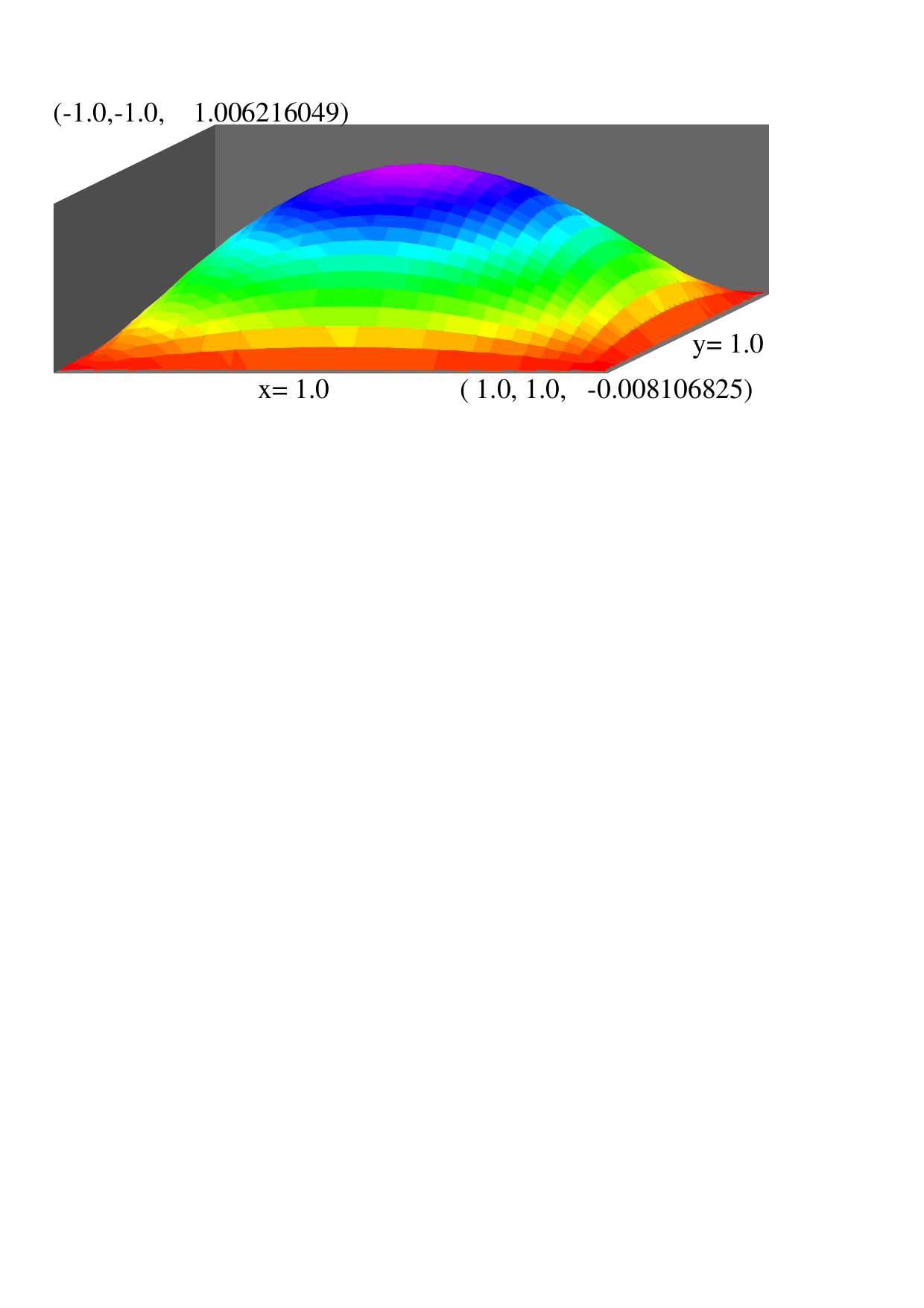}}
  \put(0,-305){\includegraphics[width=300pt]{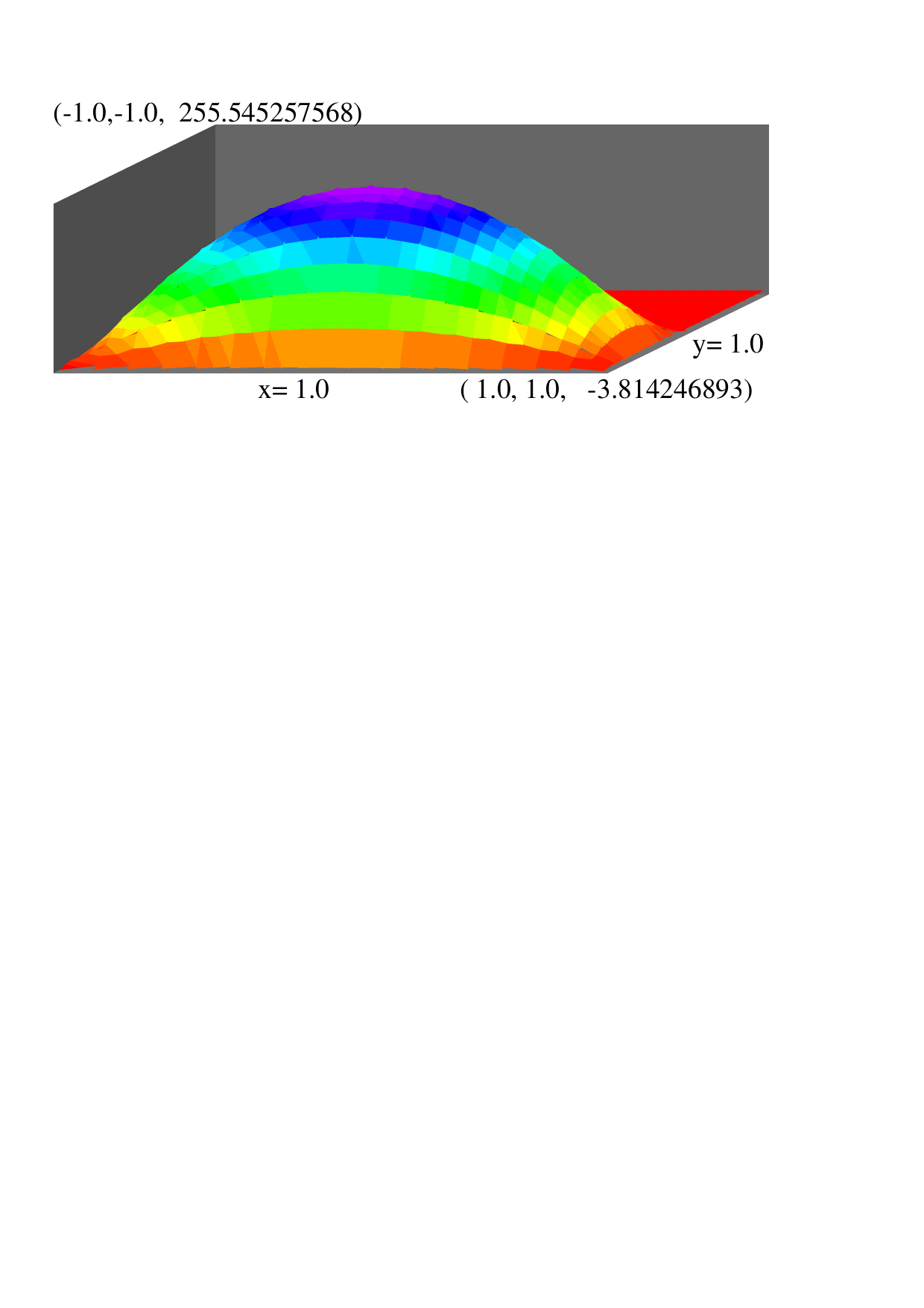}}  
 \end{picture}\end{center}
\caption{The exact solution $u$ in \eqref{u-1} for $\lambda=10^{-3}$(top), $\lambda=1$ and
  $\lambda=10^3$. }\label{fs-1}
\end{figure}

The solution in \eqref{u-1} is approximated by the weak Galerkin finite element
   $P_k$-$P_k$/$P_{k+2}$ (for $\{u_0, u_b\}$/$\nabla_w$), $k= 1,2,3$, on nonconvex polygonal
    grids shown in Figure \ref{f21}.
The errors and the computed orders of convergence are listed in Tables \ref{t1}--\ref{t3}.
The optimal order of convergence is achieved in every case.
    
\begin{figure}[H]
 \begin{center}\setlength\unitlength{1.0pt}
\begin{picture}(360,120)(0,0)
  \put(15,108){$G_1$:} \put(125,108){$G_2$:} \put(235,108){$G_3$:} 
  \put(0,-420){\includegraphics[width=380pt]{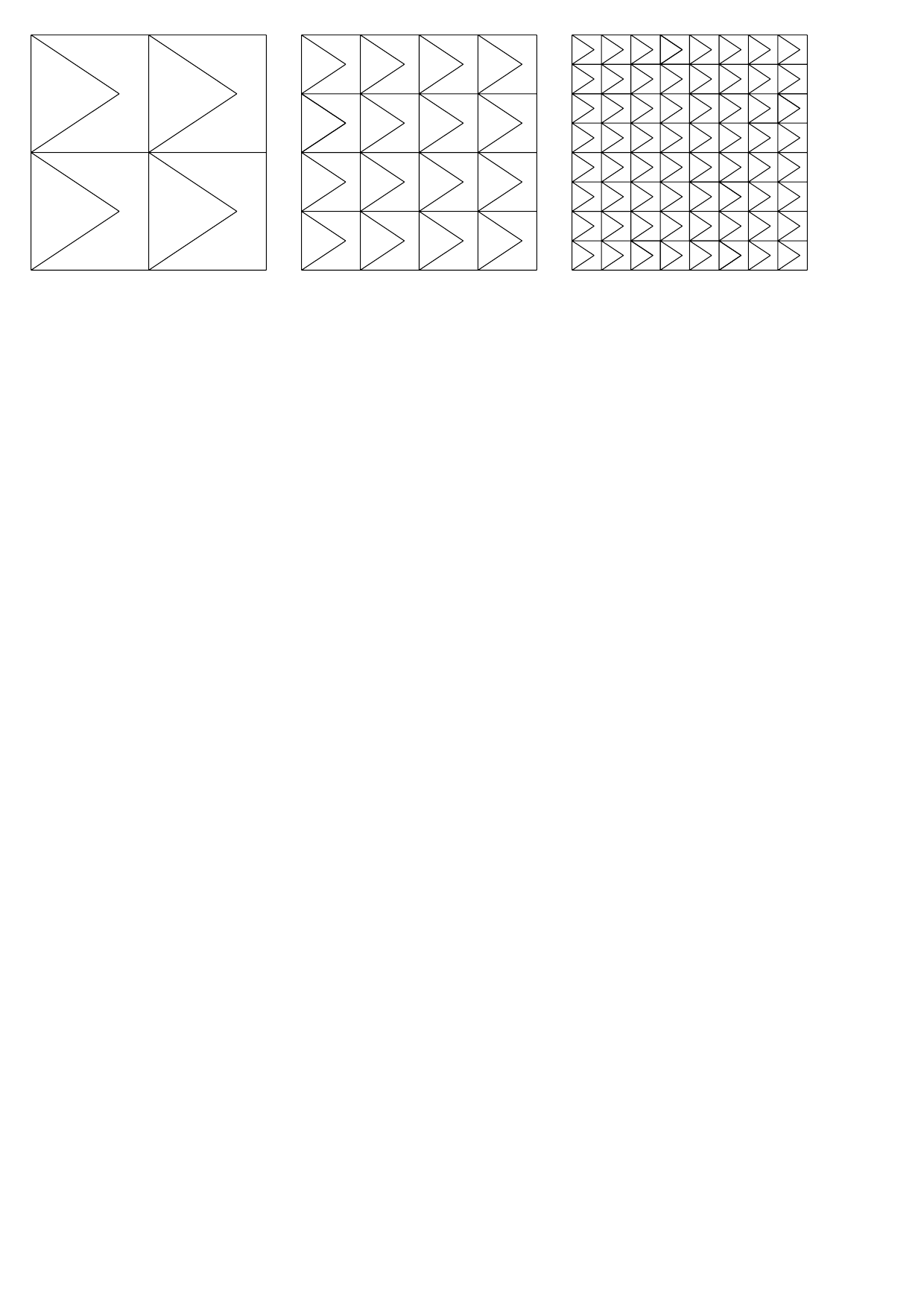}}  
 \end{picture}\end{center}
\caption{The nonconvex polygonal grids for the computation in Tables \ref{t1}--\ref{t3}. }\label{f21}
\end{figure}

  \vskip -.5cm
  \begin{table}[H]
  \caption{By the $P_1$-$P_1$/$P_3$ element for \eqref{u-1}  on Figure \ref{f21} grids.} \label{t1}
\begin{center}  
   \begin{tabular}{c|rr|rr}  
 \hline 
$G_i$ &  $ \|Q_h  u -   u_h \| $ & $O(h^r)$ &  $ \| a\nabla( Q_h u- u_h )\|_0 $ & $O(h^r)$  \\ \hline 
&\multicolumn{4}{c}{$\lambda=10^{-3}$ }\\
 \hline 
 4&    0.100E+02 &  1.7&    0.428E+01 &  1.1\\
 5&    0.265E+01 &  1.9&    0.208E+01 &  1.0\\
 6&    0.671E+00 &  2.0&    0.103E+01 &  1.0\\  \hline  
 &\multicolumn{4}{c}{$\lambda=1$ }\\
  \hline  
 4&    0.340E-01 &  1.8&    0.309E+00 &  1.1\\
 5&    0.878E-02 &  2.0&    0.150E+00 &  1.0\\
 6&    0.221E-02 &  2.0&    0.744E-01 &  1.0\\  \hline  
 &\multicolumn{4}{c}{$\lambda=10^3$ }\\
  \hline  
 4&    0.100E+02 &  1.7&    0.135E+03 &  1.1\\
 5&    0.265E+01 &  1.9&    0.659E+02 &  1.0\\
 6&    0.671E+00 &  2.0&    0.327E+02 &  1.0\\
  \hline  
\end{tabular} \end{center}  \end{table}

  \vskip -.5cm
  \begin{table}[H]
  \caption{By the $P_2$-$P_2$/$P_4$ element for \eqref{u-1} on Figure \ref{f21} grids.} \label{t2}
\begin{center}  
   \begin{tabular}{c|rr|rr}  
 \hline 
$G_i$ &  $ \|Q_h  u -   u_h \| $ & $O(h^r)$ &  $ \| a\nabla( Q_h u- u_h )\|_0 $ & $O(h^r)$  \\ \hline 
&\multicolumn{4}{c}{$\lambda=10^{-3}$ }\\
 \hline 
 3&    0.535E+00 &  3.6&    0.107E+01 &  2.0\\
 4&    0.450E-01 &  3.6&    0.266E+00 &  2.0\\
 5&    0.458E-02 &  3.3&    0.666E-01 &  2.0\\ \hline  
 &\multicolumn{4}{c}{$\lambda=1$ }\\
  \hline  
 3&    0.680E-03 &  3.5&    0.509E-01 &  2.0\\
 4&    0.633E-04 &  3.4&    0.126E-01 &  2.0\\
 5&    0.698E-05 &  3.2&    0.316E-02 &  2.0\\  \hline  
 &\multicolumn{4}{c}{$\lambda=10^3$ }\\
  \hline  
 3&    0.535E+00 &  3.6&    0.338E+02 &  2.0\\
 4&    0.450E-01 &  3.6&    0.841E+01 &  2.0\\
 5&    0.458E-02 &  3.3&    0.211E+01 &  2.0\\
  \hline  
\end{tabular} \end{center}  \end{table}

  \vskip -.5cm
  \begin{table}[H]
  \caption{By the $P_3$-$P_3$/$P_5$ element for \eqref{u-1}  on Figure \ref{f21} grids.} \label{t3}
\begin{center}  
   \begin{tabular}{c|rr|rr}  
 \hline 
$G_i$ &  $ \|Q_h  u -   u_h \| $ & $O(h^r)$ &  $ \| a\nabla( Q_h u- u_h )\|_0 $ & $O(h^r)$  \\ \hline 
&\multicolumn{4}{c}{$\lambda=10^{-3}$ }\\
 \hline 
 2&    0.158E+00 &  4.4&    0.196E+00 &  2.9\\
 3&    0.848E-02 &  4.2&    0.255E-01 &  2.9\\
 4&    0.485E-03 &  4.1&    0.323E-02 &  3.0\\\hline  
 &\multicolumn{4}{c}{$\lambda=1$ }\\
  \hline  
 2&    0.204E-03 &  4.3&    0.898E-02 &  2.9\\
 3&    0.114E-04 &  4.2&    0.115E-02 &  3.0\\
 4&    0.666E-06 &  4.1&    0.145E-03 &  3.0\\ \hline  
 &\multicolumn{4}{c}{$\lambda=10^3$ }\\
  \hline  
 2&    0.158E+00 &  4.4&    0.621E+01 &  2.9\\
 3&    0.848E-02 &  4.2&    0.805E+00 &  2.9\\
 4&    0.485E-03 &  4.1&    0.102E+00 &  3.0\\
  \hline  
\end{tabular} \end{center}  \end{table}

We compute the solution $u$ in \eqref{u-1} again,  by the weak Galerkin finite element
   $P_k$-$P_k$/$P_{k+3}$ (for $\{u_0, u_b\}$/$\nabla_w$), $k= 1,2,3$, on more nonconvex polygonal
    grids shown in Figure \ref{f31}.
The errors and the computed orders of convergence are listed in Tables \ref{t4}--\ref{t6}.
The optimal order of convergence is achieved in all cases.
The results are slightly better, in a few cases,
   than those in Tables \ref{t1}--\ref{t3}, though the grids here are worse.
We would think it is due to a much larger $u_b$ space on (more) edges.
    
\begin{figure}[H]
 \begin{center}\setlength\unitlength{1.0pt}
\begin{picture}(360,120)(0,0)
  \put(15,108){$G_1$:} \put(125,108){$G_2$:} \put(235,108){$G_3$:} 
  \put(0,-420){\includegraphics[width=380pt]{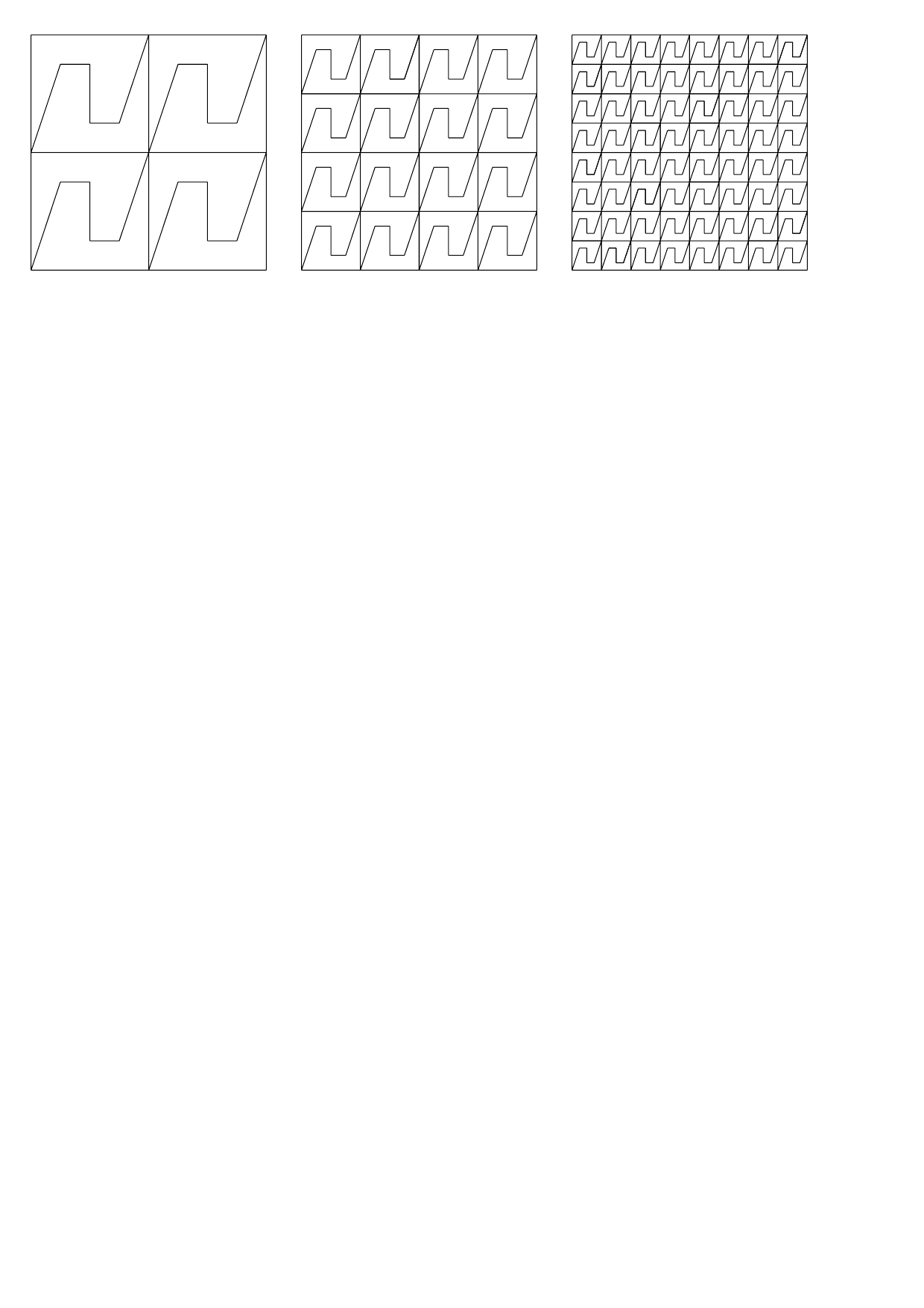}}  
 \end{picture}\end{center}
\caption{The nonconvex polygonal grids for the computation in Tables \ref{t4}--\ref{t6}. } \label{f31}
\end{figure}

  \vskip -.5cm
  \begin{table}[H]
  \caption{By the $P_1$-$P_1$/$P_4$ element for \eqref{u-1}  on Figure \ref{f31} grids.} \label{t4}
\begin{center}  
   \begin{tabular}{c|rr|rr}  
 \hline 
$G_i$ &  $ \|Q_h  u -   u_h \| $ & $O(h^r)$ &  $ \| a\nabla( Q_h u- u_h )\|_0 $ & $O(h^r)$  \\ \hline 
&\multicolumn{4}{c}{$\lambda=10^{-3}$ }\\
 \hline 
 4&    0.484E+01 &  1.9&    0.617E+01 &  1.0\\
 5&    0.124E+01 &  2.0&    0.309E+01 &  1.0\\
 6&    0.311E+00 &  2.0&    0.154E+01 &  1.0\\  \hline  
 &\multicolumn{4}{c}{$\lambda=1$ }\\
  \hline  
 4&    0.145E-01 &  1.9&    0.471E+00 &  1.0\\
 5&    0.366E-02 &  2.0&    0.235E+00 &  1.0\\
 6&    0.918E-03 &  2.0&    0.118E+00 &  1.0\\  \hline  
 &\multicolumn{4}{c}{$\lambda=10^3$ }\\
  \hline  
 4&    0.484E+01 &  1.9&    0.195E+03 &  1.0\\
 5&    0.124E+01 &  2.0&    0.976E+02 &  1.0\\
 6&    0.311E+00 &  2.0&    0.488E+02 &  1.0\\
  \hline  
\end{tabular} \end{center}  \end{table}

  \vskip -.5cm
  \begin{table}[H]
  \caption{By the $P_2$-$P_2$/$P_5$ element for \eqref{u-1} on Figure \ref{f31} grids.} \label{t5}
\begin{center}  
   \begin{tabular}{c|rr|rr}  
 \hline 
$G_i$ &  $ \|Q_h  u -   u_h \| $ & $O(h^r)$ &  $ \| a\nabla( Q_h u- u_h )\|_0 $ & $O(h^r)$  \\ \hline 
&\multicolumn{4}{c}{$\lambda=10^{-3}$ }\\
 \hline 
 3&    0.308E+00 &  3.3&    0.965E+00 &  1.9\\
 4&    0.338E-01 &  3.2&    0.244E+00 &  2.0\\
 5&    0.401E-02 &  3.1&    0.611E-01 &  2.0\\ \hline  
 &\multicolumn{4}{c}{$\lambda=1$ }\\
  \hline  
 3&    0.569E-03 &  3.3&    0.613E-01 &  2.0\\
 4&    0.652E-04 &  3.1&    0.154E-01 &  2.0\\
 5&    0.789E-05 &  3.0&    0.385E-02 &  2.0\\  \hline  
 &\multicolumn{4}{c}{$\lambda=10^3$ }\\
  \hline  
 3&    0.308E+00 &  3.3&    0.305E+02 &  1.9\\
 4&    0.338E-01 &  3.2&    0.771E+01 &  2.0\\
 5&    0.401E-02 &  3.1&    0.193E+01 &  2.0\\
  \hline  
\end{tabular} \end{center}  \end{table}

  \vskip -.5cm
  \begin{table}[H]
  \caption{By the $P_3$-$P_3$/$P_6$ element for \eqref{u-1}  on Figure \ref{f31} grids.} \label{t6}
\begin{center}  
   \begin{tabular}{c|rr|rr}  
 \hline 
$G_i$ &  $ \|Q_h  u -   u_h \| $ & $O(h^r)$ &  $ \| a\nabla( Q_h u- u_h )\|_0 $ & $O(h^r)$  \\ \hline 
&\multicolumn{4}{c}{$\lambda=10^{-3}$ }\\
 \hline 
 2&    0.149E+00 &  4.4&    0.310E+00 &  3.0\\
 3&    0.813E-02 &  4.2&    0.393E-01 &  3.0\\
 4&    0.471E-03 &  4.1&    0.495E-02 &  3.0\\ \hline  
 &\multicolumn{4}{c}{$\lambda=1$ }\\
  \hline  
 2&    0.198E-03 &  4.3&    0.140E-01 &  3.0\\
 3&    0.111E-04 &  4.2&    0.177E-02 &  3.0\\
 4&    0.686E-06 &  4.0&    0.222E-03 &  3.0\\ \hline  
 &\multicolumn{4}{c}{$\lambda=10^3$ }\\
  \hline  
 2&    0.149E+00 &  4.4&    0.981E+01 &  3.0\\
 3&    0.813E-02 &  4.2&    0.124E+01 &  3.0\\
 4&    0.471E-03 &  4.1&    0.157E+00 &  3.0\\
  \hline  
\end{tabular} \end{center}  \end{table}

In the second test,  we solve the interface problem \eqref{model-1}--\eqref{model-4} also
   on a square domain $\Omega= (-1,1)\times (-1,1)$, where $g=0$, $g_D=0$, $g_N=0$,
\a{ a=\begin{cases} 1, \quad & \tx{if} \ (x,y)\in(-\frac 13,\frac 13)^2=:\Omega_1, \\
                     \lambda , \quad & \tx{if} \ (x,y)\in \Omega\setminus \Omega_1, \end{cases} }
and
\a{ f=-4 (243 x^4 y^2 + 243 x^2 y^4 - 45 x^4 - 540 x^2 y^2
           - 45 y^4 + 77 x^2 + 77 y^2 - 10). }
The exact solution is  
\an{\label{u-2} u=\begin{cases} (-x^2 + 1) (-y^2 + 1) (-9 x^2 + 1) (-9 y^2 + 1), 
\quad & \tx{if} \ (x,y)\in \Omega_1, \\
                      \lambda^{-1} (-x^2 + 1) (-y^2 + 1) (-9 x^2 + 1) (-9 y^2 + 1)
                       , \quad & \tx{if} \ (x,y)\in \Omega\setminus \Omega_1, \end{cases} }  
                       where $\Omega_1$ is defined one equation above.                   
We show this solution on level 3 grid ($G_3$, shown in Figure \ref{f25}) in
      Figure \ref{fs-2}.

\begin{figure}[H]
 \begin{center}\setlength\unitlength{1.0pt}
\begin{picture}(300,290)(0,0) 
  \put(0,-105){\includegraphics[width=300pt]{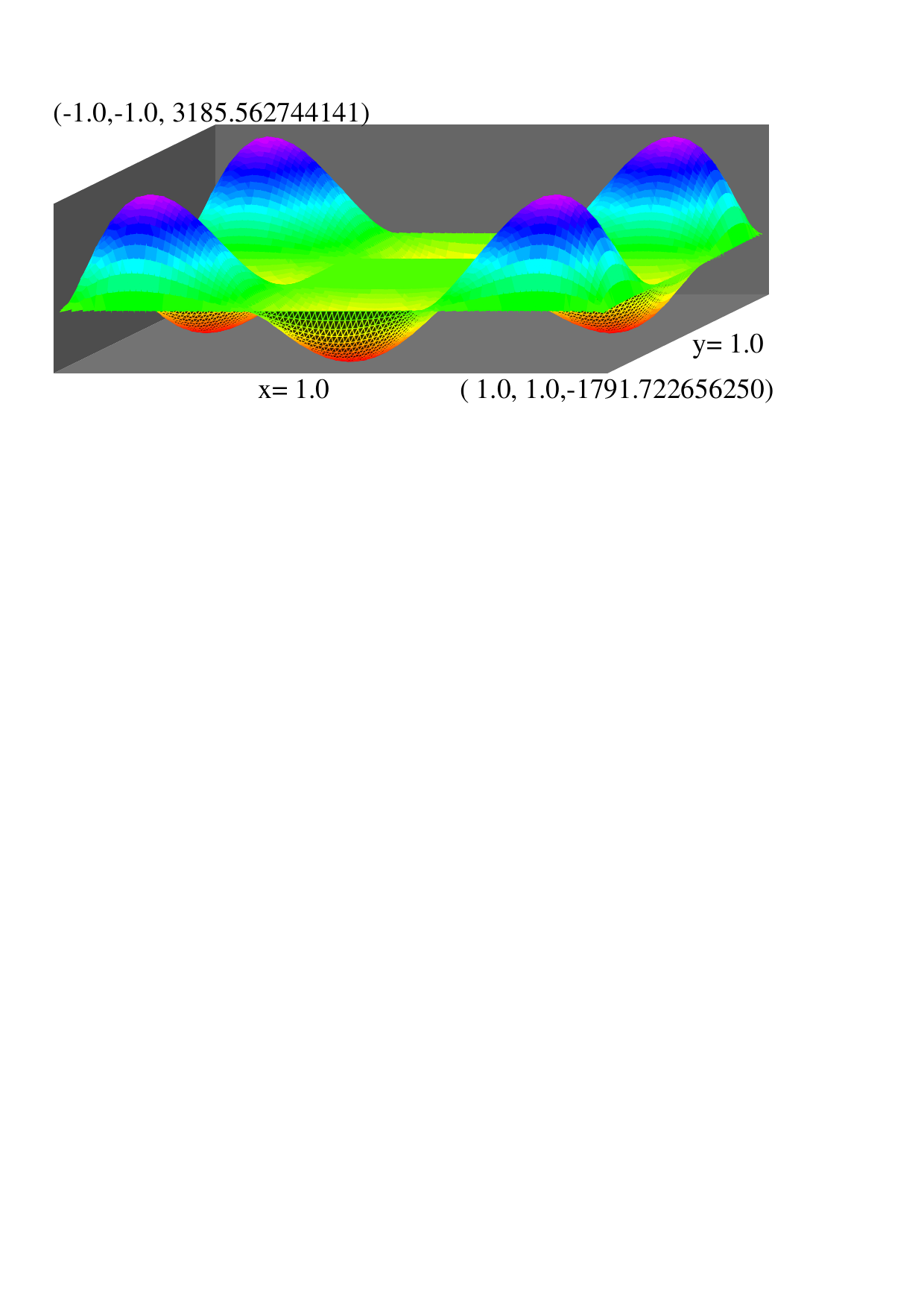}}
  \put(0,-205){\includegraphics[width=300pt]{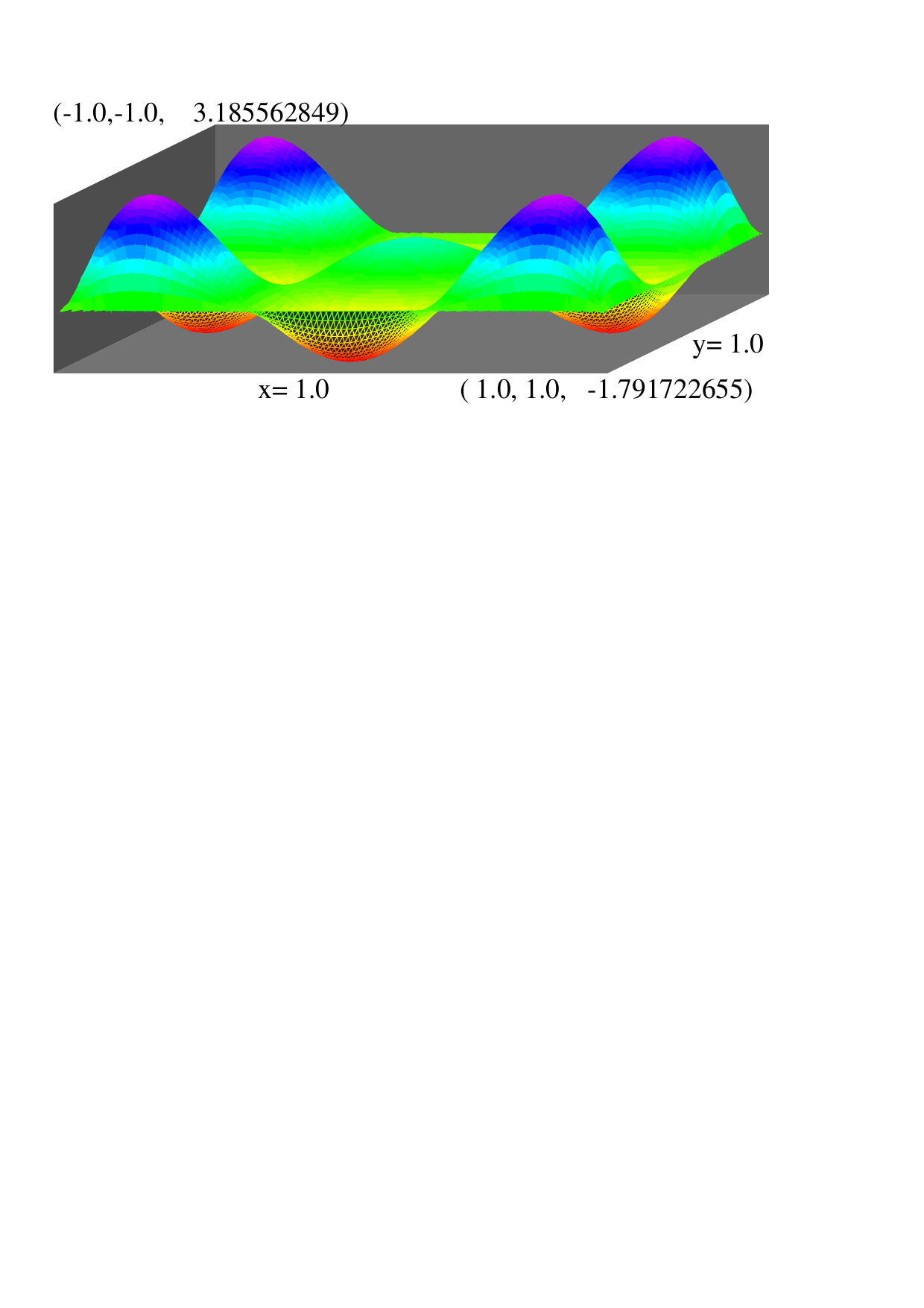}}
  \put(0,-305){\includegraphics[width=300pt]{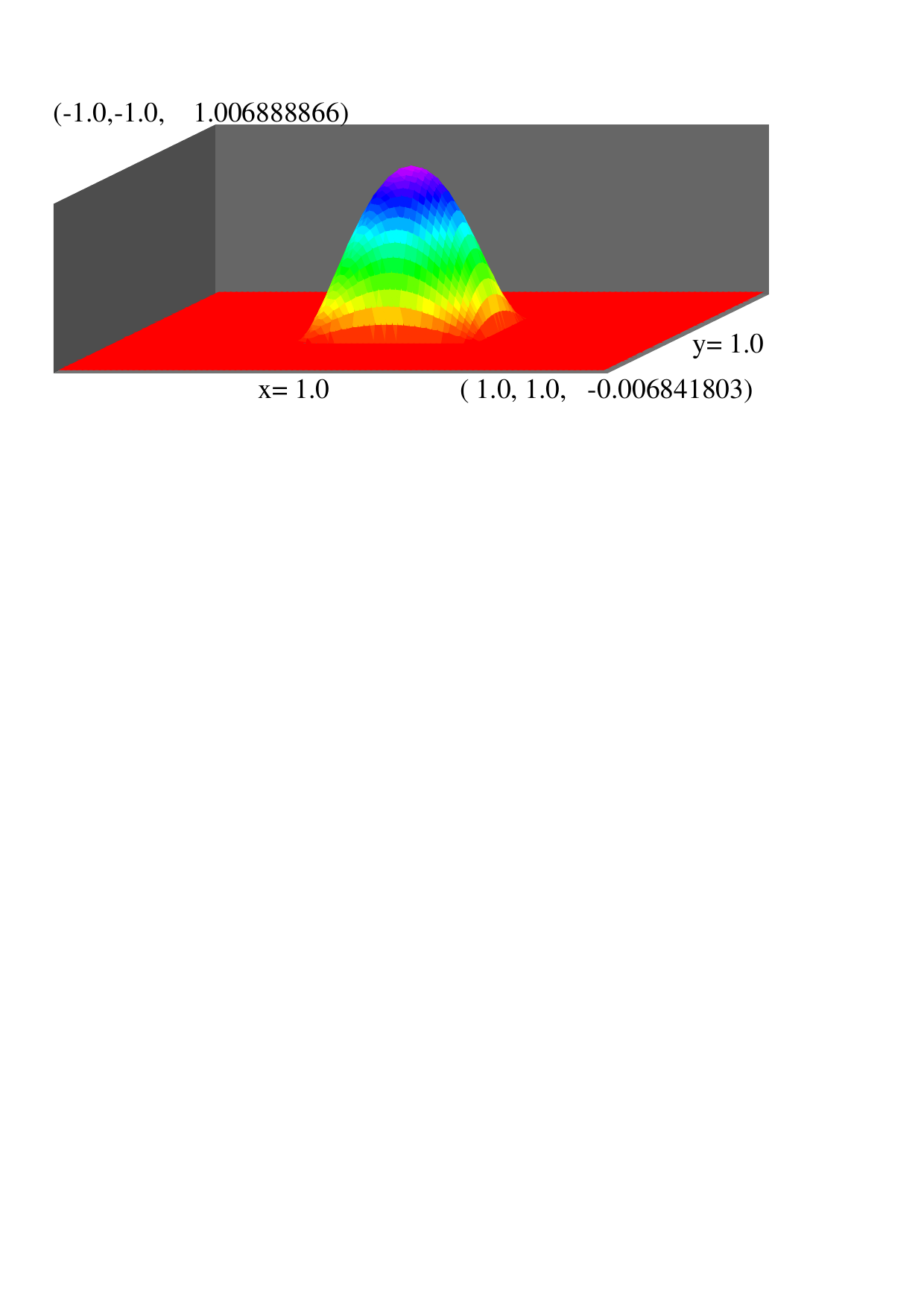}}  
 \end{picture}\end{center}
\caption{The exact solution $u$ in \eqref{u-2} for $\lambda=10^{-3}$(top), $\lambda=1$ and
  $\lambda=10^3$. }\label{fs-2}
\end{figure}

The solution in \eqref{u-2} is approximated by the weak Galerkin finite element
   $P_k$-$P_k$/$P_{k+2}$ (for $\{u_0, u_b\}$/$\nabla_w$), $k= 1,2,3$, on nonconvex polygonal
    grids shown in Figure \ref{f25}.
The errors and the computed orders of convergence are listed in Tables \ref{t7}--\ref{t9}.
The optimal order of convergence is achieved in every case.
    
\begin{figure}[H]
 \begin{center}\setlength\unitlength{1.0pt}
\begin{picture}(360,120)(0,0)
  \put(15,108){$G_1$:} \put(125,108){$G_2$:} \put(235,108){$G_3$:} 
  \put(0,-420){\includegraphics[width=380pt]{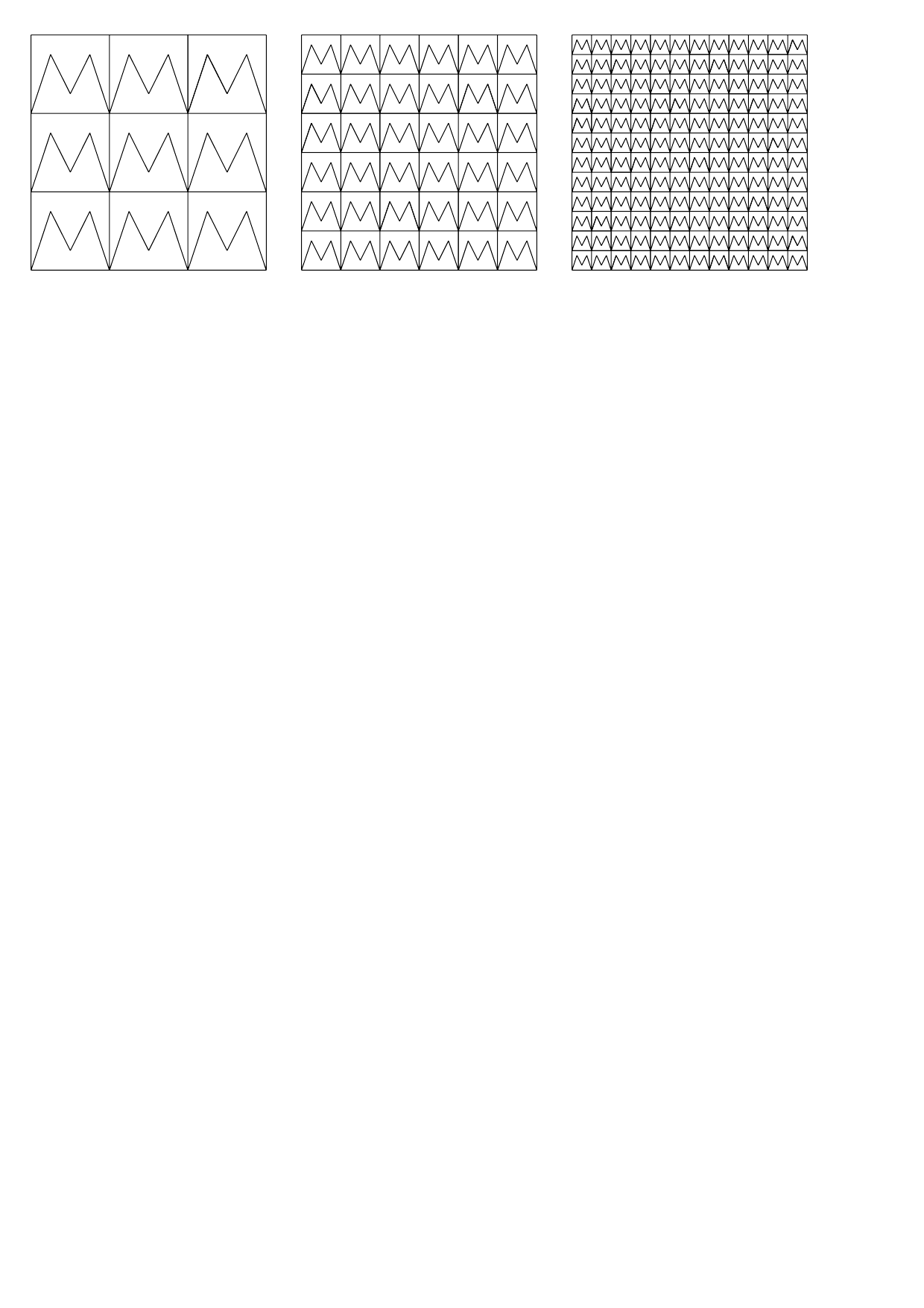}}  
 \end{picture}\end{center}
\caption{The nonconvex polygonal grids for the computation in Tables \ref{t7}--\ref{t9}. }\label{f25}
\end{figure}

  \vskip -.5cm
  \begin{table}[H]
  \caption{By the $P_1$-$P_1$/$P_3$ element for \eqref{u-2}  on Figure \ref{f25} grids.} \label{t7}
\begin{center}  
   \begin{tabular}{c|rr|rr}  
 \hline 
$G_i$ &  $ \|Q_h  u -   u_h \| $ & $O(h^r)$ &  $ \| a\nabla( Q_h u- u_h )\|_0 $ & $O(h^r)$  \\ \hline 
&\multicolumn{4}{c}{$\lambda=10^{-3}$ }\\
 \hline 
 3&    0.270E+03 &  1.3&    0.312E+03 &  1.2\\
 4&    0.782E+02 &  1.8&    0.158E+03 &  1.0\\
 5&    0.203E+02 &  1.9&    0.790E+02 &  1.0\\  \hline  
 &\multicolumn{4}{c}{$\lambda=1$ }\\
  \hline  
 3&    0.273E+00 &  1.3&    0.990E+01 &  1.0\\
 4&    0.790E-01 &  1.8&    0.501E+01 &  1.0\\
 5&    0.205E-01 &  1.9&    0.252E+01 &  1.0\\  \hline  
 &\multicolumn{4}{c}{$\lambda=10^3$ }\\
  \hline  
 3&    0.359E-01 &  1.4&    0.152E+01 &  2.8\\
 4&    0.100E-01 &  1.8&    0.606E+00 &  1.3\\
 5&    0.256E-02 &  2.0&    0.300E+00 &  1.0\\
  \hline  
\end{tabular} \end{center}  \end{table}

  \vskip -.5cm
  \begin{table}[H]
  \caption{By the $P_2$-$P_2$/$P_4$ element for \eqref{u-2} on Figure \ref{f25} grids.} \label{t8}
\begin{center}  
   \begin{tabular}{c|rr|rr}  
 \hline 
$G_i$ &  $ \|Q_h  u -   u_h \| $ & $O(h^r)$ &  $ \| a\nabla( Q_h u- u_h )\|_0 $ & $O(h^r)$  \\ \hline 
&\multicolumn{4}{c}{$\lambda=10^{-3}$ }\\
 \hline 
 2&    0.133E+03 &  2.5&    0.259E+03 &  4.4\\
 3&    0.168E+02 &  3.0&    0.515E+02 &  2.3\\
 4&    0.190E+01 &  3.1&    0.132E+02 &  2.0\\ \hline  
 &\multicolumn{4}{c}{$\lambda=1$ }\\
  \hline  
 2&    0.134E+00 &  2.5&    0.612E+01 &  2.5\\
 3&    0.168E-01 &  3.0&    0.162E+01 &  1.9\\
 4&    0.190E-02 &  3.1&    0.419E+00 &  2.0\\  \hline  
 &\multicolumn{4}{c}{$\lambda=10^3$ }\\
  \hline  
 2&    0.132E-01 &  2.2&    0.310E+01 &  1.2\\
 3&    0.151E-02 &  3.1&    0.238E+00 &  3.7\\
 4&    0.176E-03 &  3.1&    0.446E-01 &  2.4\\
  \hline  
\end{tabular} \end{center}  \end{table}

  \vskip -.5cm
  \begin{table}[H]
  \caption{By the $P_3$-$P_3$/$P_5$ element for \eqref{u-2}  on Figure \ref{f25} grids.} \label{t9}
\begin{center}  
   \begin{tabular}{c|rr|rr}  
 \hline 
$G_i$ &  $ \|Q_h  u -   u_h \| $ & $O(h^r)$ &  $ \| a\nabla( Q_h u- u_h )\|_0 $ & $O(h^r)$  \\ \hline 
&\multicolumn{4}{c}{$\lambda=10^{-3}$ }\\
 \hline 
 2&    0.169E+02 &  4.1&    0.455E+02 &  3.5\\
 3&    0.101E+01 &  4.1&    0.480E+01 &  3.2\\
 4&    0.592E-01 &  4.1&    0.607E+00 &  3.0\\ \hline  
 &\multicolumn{4}{c}{$\lambda=1$ }\\
  \hline  
 2&    0.170E-01 &  4.1&    0.123E+01 &  3.6\\
 3&    0.101E-02 &  4.1&    0.151E+00 &  3.0\\
 4&    0.593E-04 &  4.1&    0.193E-01 &  3.0\\  \hline  
 &\multicolumn{4}{c}{$\lambda=10^3$ }\\
  \hline   
  \hline  
\end{tabular} \end{center}  \end{table}

We compute the solution $u$ in \eqref{u-2} again,  by the weak Galerkin finite element
   $P_k$-$P_k$/$P_{k+3}$ (for $\{u_0, u_b\}$/$\nabla_w$), $k= 1,2,3$, on more nonconvex polygonal
    grids shown in Figure \ref{f23}.
The errors and the computed orders of convergence are listed in Tables \ref{t10}--\ref{t12}.
The optimal order of convergence is achieved in all cases.
The results are similar to those in Tables \ref{t7}--\ref{t9}, though the grids here are worse.

\begin{figure}[H]
 \begin{center}\setlength\unitlength{1.0pt}
\begin{picture}(380,120)(0,0)
  \put(15,108){$G_1$:} \put(125,108){$G_2$:} \put(235,108){$G_3$:} 
  \put(0,-420){\includegraphics[width=380pt]{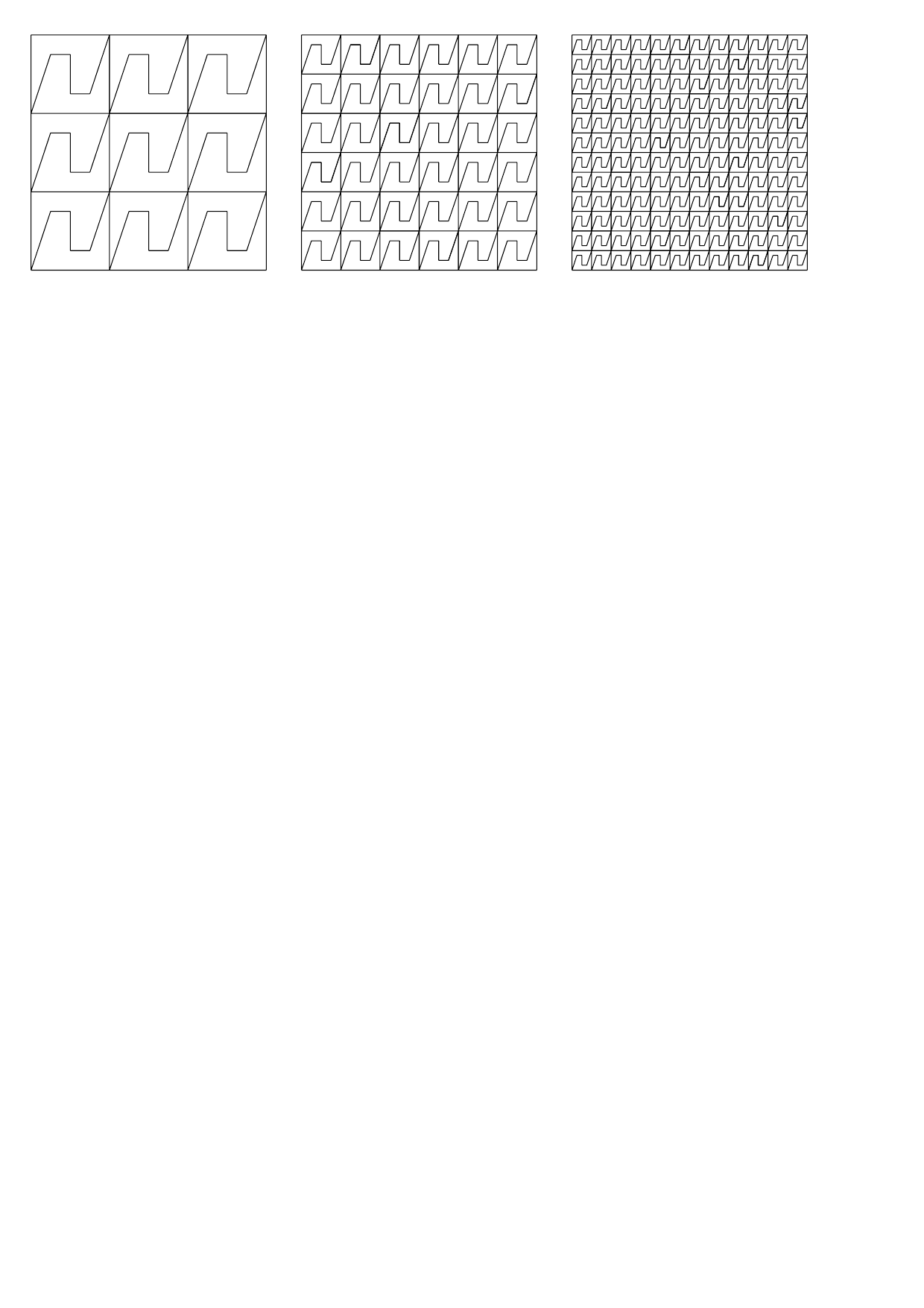}}  
 \end{picture}\end{center}
\caption{The nonconvex polygonal grids for the computation in Tables \ref{t10}--\ref{t12}. } \label{f23}
\end{figure}

  \vskip -.5cm
  \begin{table}[H]
  \caption{By the $P_1$-$P_1$/$P_4$ element for \eqref{u-2}  on Figure \ref{f23} grids.} \label{t10}
\begin{center}  
   \begin{tabular}{c|rr|rr}  
 \hline 
$G_i$ &  $ \|Q_h  u -   u_h \| $ & $O(h^r)$ &  $ \| a\nabla( Q_h u- u_h )\|_0 $ & $O(h^r)$  \\ \hline 
&\multicolumn{4}{c}{$\lambda=10^{-3}$ }\\
 \hline 
 3&    0.444E+03 &  1.1&    0.484E+03 &  1.7\\
 4&    0.137E+03 &  1.7&    0.237E+03 &  1.0\\
 5&    0.364E+02 &  1.9&    0.118E+03 &  1.0\\  \hline  
 &\multicolumn{4}{c}{$\lambda=1$ }\\
  \hline  
 3&    0.456E+00 &  1.1&    0.152E+02 &  1.4\\
 4&    0.140E+00 &  1.7&    0.753E+01 &  1.0\\
 5&    0.372E-01 &  1.9&    0.377E+01 &  1.0\\ \hline  
 &\multicolumn{4}{c}{$\lambda=10^3$ }\\
  \hline  
 3&    0.604E-01 &  1.3&    0.305E+01 &  3.2\\
 4&    0.178E-01 &  1.8&    0.954E+00 &  1.7\\
 5&    0.465E-02 &  1.9&    0.461E+00 &  1.0\\
  \hline  
\end{tabular} \end{center}  \end{table}

  \vskip -.5cm
  \begin{table}[H]
  \caption{By the $P_2$-$P_2$/$P_5$ element for \eqref{u-2} on Figure \ref{f23} grids.} \label{t11}
\begin{center}  
   \begin{tabular}{c|rr|rr}  
 \hline 
$G_i$ &  $ \|Q_h  u -   u_h \| $ & $O(h^r)$ &  $ \| a\nabla( Q_h u- u_h )\|_0 $ & $O(h^r)$  \\ \hline 
&\multicolumn{4}{c}{$\lambda=10^{-3}$ }\\
 \hline 
 3&    0.196E+02 &  3.5&    0.759E+02 &  3.1\\
 4&    0.199E+01 &  3.3&    0.184E+02 &  2.0\\
 5&    0.223E+00 &  3.2&    0.465E+01 &  2.0\\  \hline  
 &\multicolumn{4}{c}{$\lambda=1$ }\\
  \hline  
 3&    0.196E-01 &  3.5&    0.233E+01 &  2.5\\
 4&    0.199E-02 &  3.3&    0.584E+00 &  2.0\\
 5&    0.224E-03 &  3.2&    0.148E+00 &  2.0\\  \hline  
 &\multicolumn{4}{c}{$\lambda=10^3$ }\\
  \hline  
 3&    0.152E-02 &  3.3&    0.501E+00 &  3.9\\
 4&    0.171E-03 &  3.2&    0.663E-01 &  2.9\\
 5&    0.205E-04 &  3.1&    0.156E-01 &  2.1\\
  \hline  
\end{tabular} \end{center}  \end{table}

  \vskip -.5cm
  \begin{table}[H]
  \caption{By the $P_3$-$P_3$/$P_6$ element for \eqref{u-2}  on Figure \ref{f23} grids.} \label{t12}
\begin{center}  
   \begin{tabular}{c|rr|rr}  
 \hline 
$G_i$ &  $ \|Q_h  u -   u_h \| $ & $O(h^r)$ &  $ \| a\nabla( Q_h u- u_h )\|_0 $ & $O(h^r)$  \\ \hline 
&\multicolumn{4}{c}{$\lambda=10^{-3}$ }\\
 \hline 
 2&    0.346E+02 &  5.8&    0.120E+03 &  5.6\\
 3&    0.117E+01 &  4.9&    0.661E+01 &  4.2\\
 4&    0.611E-01 &  4.3&    0.770E+00 &  3.1\\ \hline  
 &\multicolumn{4}{c}{$\lambda=1$ }\\
  \hline  
 2&    0.345E-01 &  5.8&    0.291E+01 &  4.9\\
 3&    0.117E-02 &  4.9&    0.202E+00 &  3.8\\
 4&    0.612E-04 &  4.3&    0.244E-01 &  3.0\\ \hline  
 &\multicolumn{4}{c}{$\lambda=10^3$ }\\
  \hline  
 2&    0.268E-02 &  5.8&    0.346E+01 &  5.4\\
 3&    0.691E-04 &  5.3&    0.743E-01 &  5.5\\
 4&    0.390E-05 &  4.1&    0.258E-02 &  4.8\\
  \hline  
\end{tabular} \end{center}  \end{table}

In the third test,  we solve the interface problem \eqref{model-1}--\eqref{model-4} also
   on a square domain $\Omega= (-1,1)\times (-1,1)$, where $g=0$, $g_D=0$, $g_N=0$,
\a{ a=\begin{cases} 1, \quad & \tx{if} \ (x,y)\in(-\frac 13,\frac 13)^2=:\Omega_1, \\
                     \lambda , \quad & \tx{if} \ (x,y)\in \Omega\setminus \Omega_1, \end{cases} }
and
\an{\label{u-3} f=1. \tx{\quad The exact solution is unknown.} }
Thus, we plot the numerical $P_4$ solution $u_h$ on the level 4 triangular grid
  (cf. Figure \ref{f13}) in Figure \ref{fs-3}. 
      
\begin{figure}[H]
 \begin{center}\setlength\unitlength{1.0pt}
\begin{picture}(360,120)(0,0)
  \put(15,108){$G_1$:} \put(125,108){$G_2$:} \put(235,108){$G_3$:} 
  \put(0,-420){\includegraphics[width=380pt]{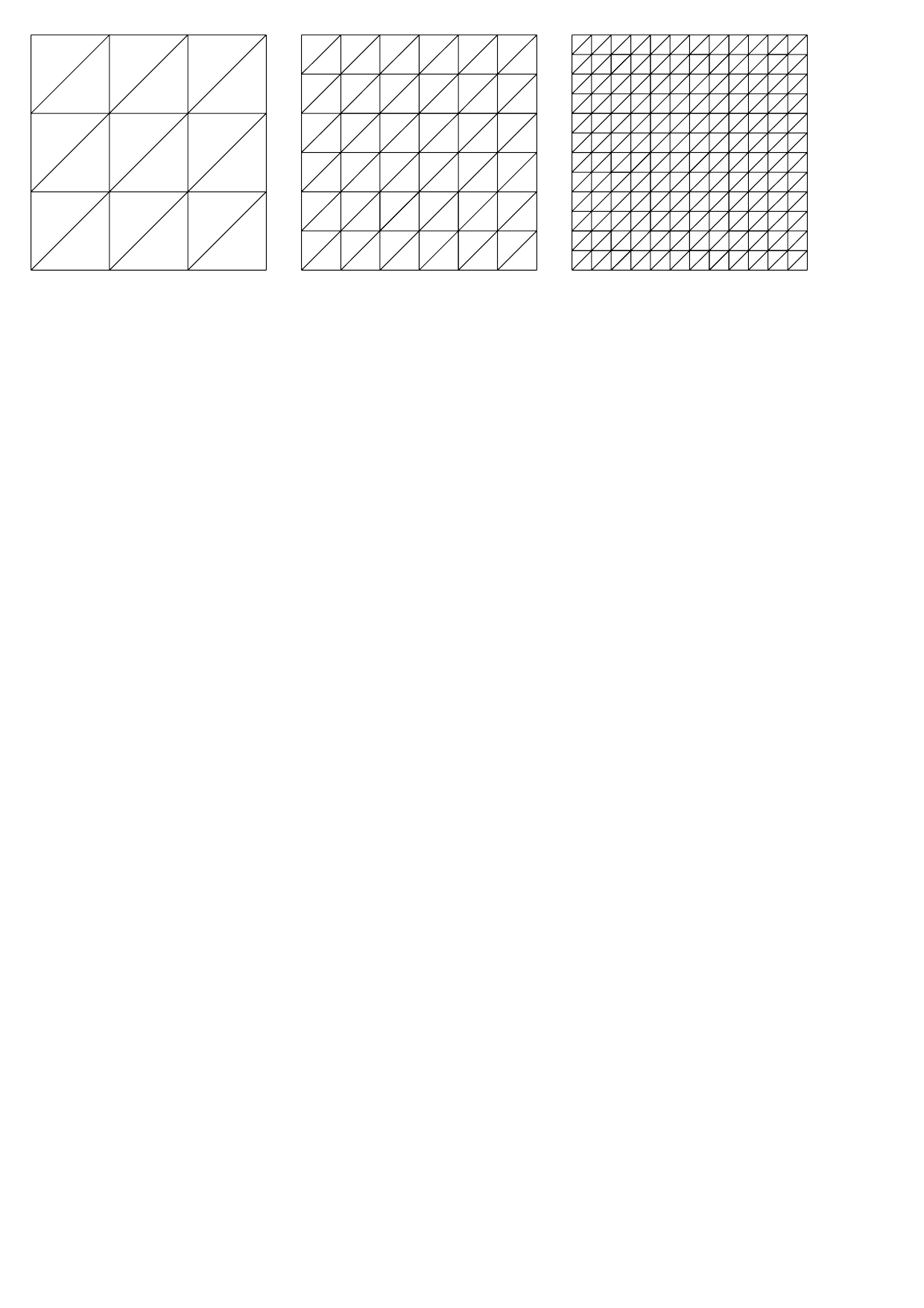}}  
 \end{picture}\end{center}
\caption{The triangular grids for the numerical solution $u_h$ for \eqref{u-3}. }\label{f13}
\end{figure}

\begin{figure}[H]
 \begin{center}\setlength\unitlength{1.0pt}
\begin{picture}(300,290)(0,0) 
  \put(0,-105){\includegraphics[width=300pt]{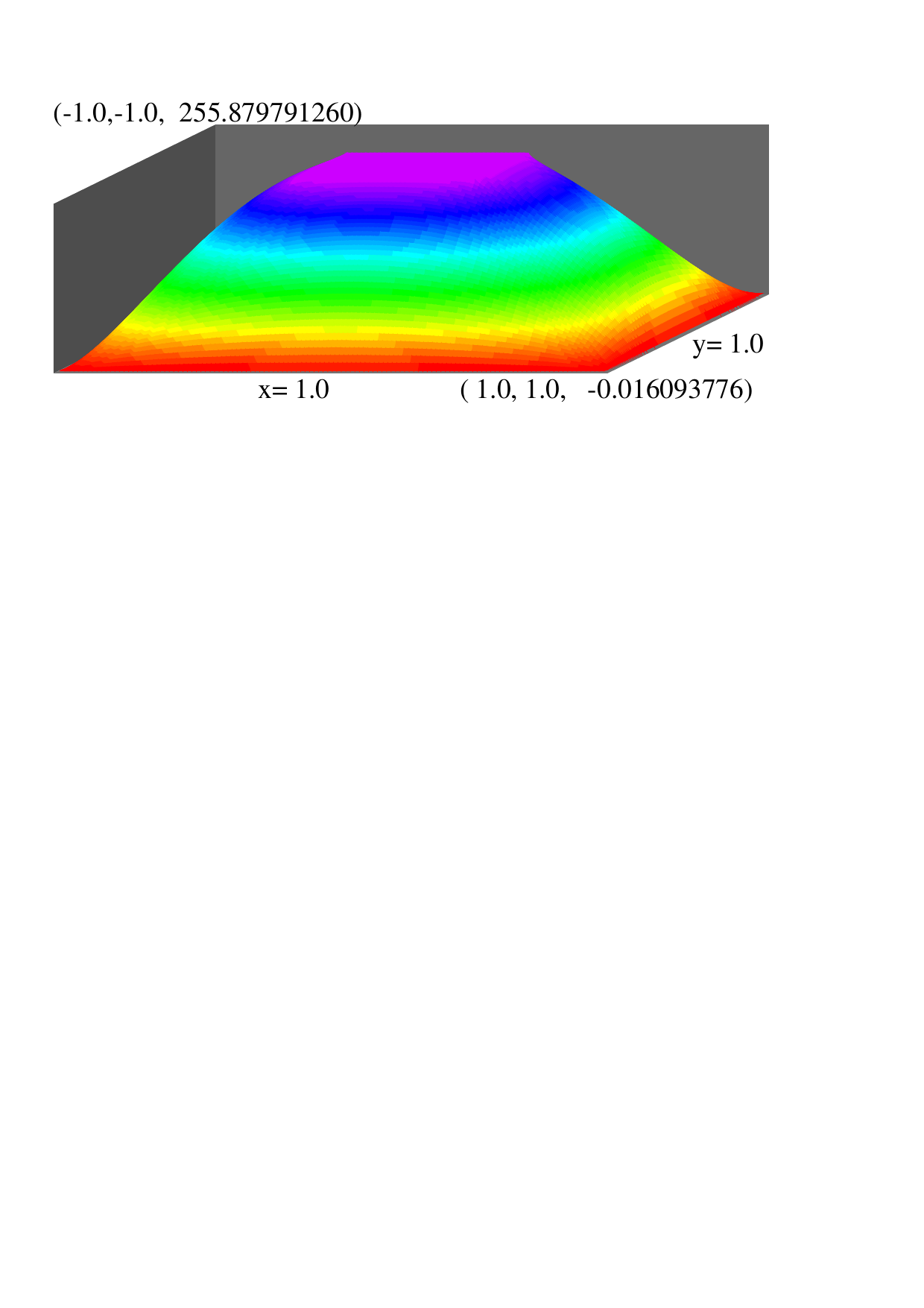}}
  \put(0,-205){\includegraphics[width=300pt]{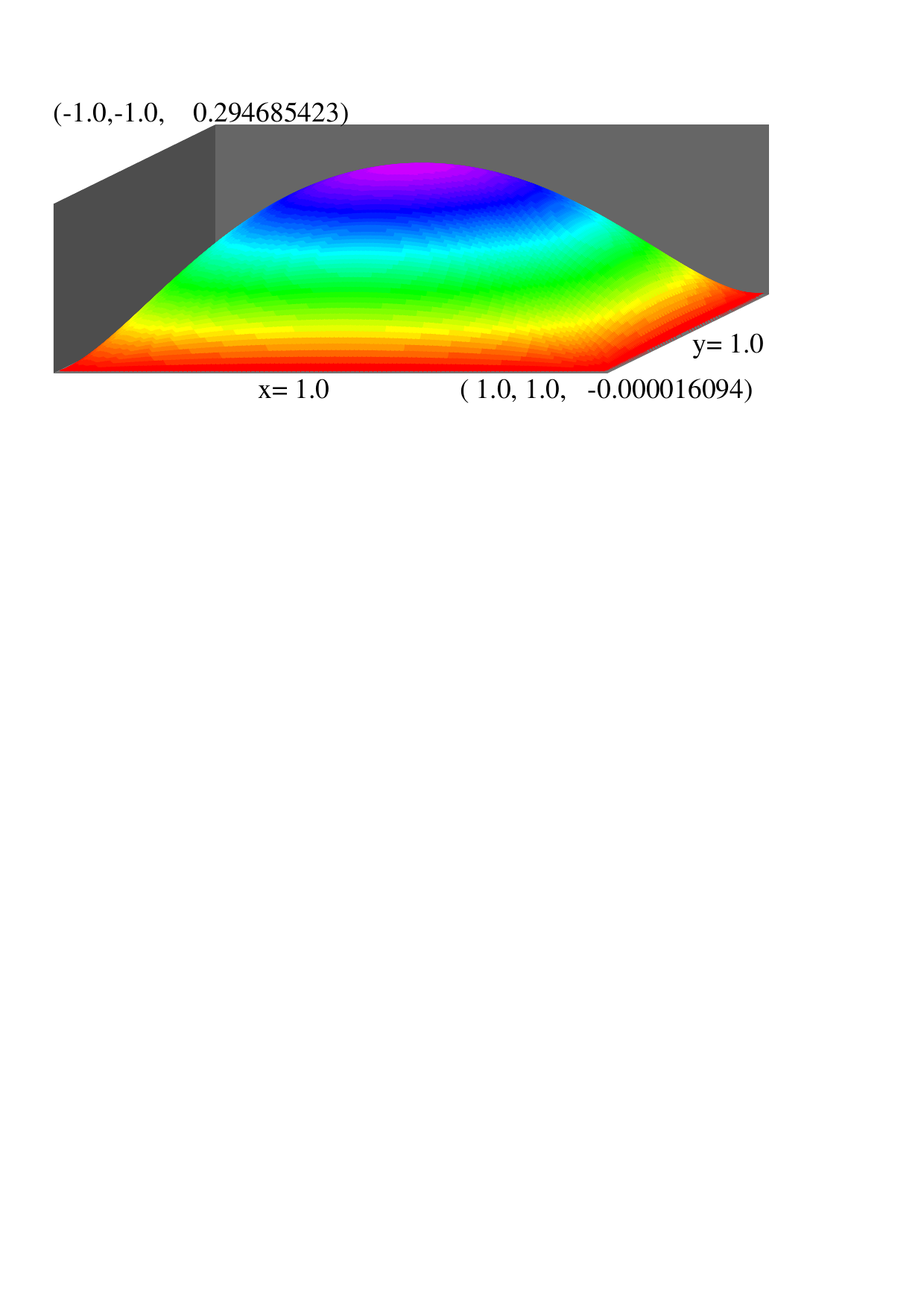}}
  \put(0,-305){\includegraphics[width=300pt]{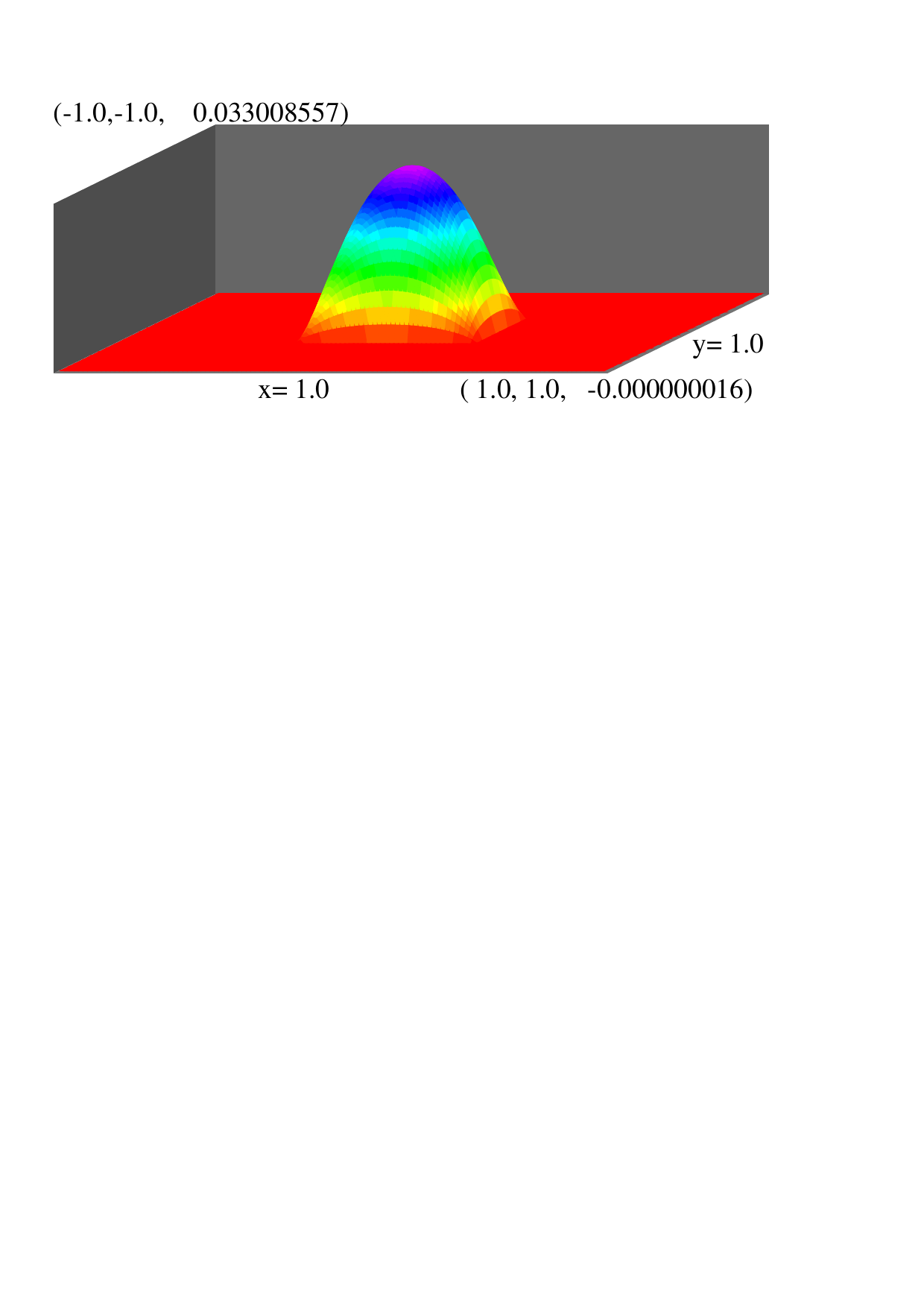}}  
 \end{picture}\end{center}
\caption{The numerical solution $u_h$ for \eqref{u-3}, when $\lambda=10^{-3}$(top), $\lambda=1$ and
  $\lambda=10^3$. }\label{fs-3}
\end{figure}

The solution in \eqref{u-3} is approximated by the weak Galerkin finite element
   $P_k$-$P_k$/$P_{k+1}$ (for $\{u_0, u_b\}$/$\nabla_w$), $k= 1,2,3$, on triangular
    grids shown in Figure \ref{f13}.
We take the difference between the $P_4$ solution  $u_4$
   (shown in Figure \ref{fs-3}) and the $P_k$ solution
  $u_h$ at two points as the errors there.
The errors and the computed orders of convergence are listed in Tables \ref{t13}--\ref{t15}. 
Roughly,  we achieved the optimal order of convergence, $O(h^{k+1})$ for $P_k$ solutions,
  in the last three tables.
  
  \vskip -.5cm
  \begin{table}[H]
  \caption{By the $P_1$-$P_1$/$P_2$ element for \eqref{u-3}  on Figure \ref{f13} grids.} \label{t13}
\begin{center}  
   \begin{tabular}{c|rr|rr}  
 \hline 
$G_i$ &  $ | (u_4 -   u_h)(0,0) | $ & $O(h^r)$ &  $ | (u_4 -   u_h)(2/3, 2/3) | $ & $O(h^r)$  \\ \hline 
&\multicolumn{4}{c}{$\lambda=10^{-3}$ }\\
 \hline
 3&    0.562E+00 &  1.8&    0.223E+00 &  3.2\\
 4&    0.180E+00 &  1.6&    0.232E-01 &  3.3\\
 5&    0.679E-01 &  1.4&    0.762E-03 &  4.9\\  \hline  
 &\multicolumn{4}{c}{$\lambda=1$ }\\
  \hline  
 3&    0.125E-02 &  2.0&    0.215E-03 &  3.2\\
 4&    0.312E-03 &  2.0&    0.236E-04 &  3.2\\
 5&    0.781E-04 &  2.0&    0.224E-05 &  3.4\\  \hline  
 &\multicolumn{4}{c}{$\lambda=10^3$ }\\
  \hline  
 3&    0.123E-02 &  1.9&    0.227E-06 &  3.1\\
 4&    0.311E-03 &  2.0&    0.254E-07 &  3.2\\
 5&    0.780E-04 &  2.0&    0.228E-08 &  3.5\\
  \hline  
\end{tabular} \end{center}  \end{table}

  \vskip -.5cm
  \begin{table}[H]
  \caption{By the $P_2$-$P_2$/$P_3$ element for \eqref{u-3} on Figure \ref{f13} grids.} \label{t14}
\begin{center}  
   \begin{tabular}{c|rr|rr}  
 \hline 
$G_i$ &  $ | (u_4 -   u_h)(0,0) | $ & $O(h^r)$ &  $ | (u_4 -   u_h)(2/3, 2/3) | $ & $O(h^r)$  \\ \hline 
&\multicolumn{4}{c}{$\lambda=10^{-3}$ }\\
 \hline 
 3&    0.801E-01 &  1.5&    0.819E-01 &  3.3\\
 4&    0.218E-01 &  1.9&    0.889E-02 &  3.2\\
 5&    0.132E-02 &  4.0&    0.145E-02 &  2.6\\ \hline  
 &\multicolumn{4}{c}{$\lambda=1$ }\\
  \hline  
 3&    0.367E-05 &  4.0&    0.864E-04 &  3.2\\
 4&    0.229E-06 &  4.0&    0.101E-04 &  3.1\\
 5&    0.143E-07 &  4.0&    0.122E-05 &  3.0\\  \hline  
 &\multicolumn{4}{c}{$\lambda=10^3$ }\\
  \hline  
 3&    0.335E-04 &  4.1&    0.872E-07 &  3.1\\
 4&    0.206E-05 &  4.0&    0.104E-07 &  3.1\\
 5&    0.128E-06 &  4.0&    0.128E-08 &  3.0\\
  \hline  
\end{tabular} \end{center}  \end{table}

  \vskip -.5cm
  \begin{table}[H]
  \caption{By the $P_3$-$P_3$/$P_4$ element for \eqref{u-3}  on Figure \ref{f13} grids.} \label{t15}
\begin{center}  
   \begin{tabular}{c|rr|rr}  
 \hline 
$G_i$ &  $ | (u_4 -   u_h)(0,0) | $ & $O(h^r)$ &  $ | (u_4 -   u_h)(2/3, 2/3) | $ & $O(h^r)$  \\ \hline 
&\multicolumn{4}{c}{$\lambda=10^{-3}$ }\\
 \hline 
 3&    0.470E-01 &  1.6&    0.589E-02 &  3.7\\
 4&    0.445E-02 &  3.4&    0.598E-03 &  3.3\\
 5&    0.367E-02 &  3.6&    0.650E-04 &  3.2\\ \hline  
 &\multicolumn{4}{c}{$\lambda=1$ }\\
  \hline  
 3&    0.831E-06 &  4.0&    0.253E-05 &  4.2\\
 4&    0.519E-07 &  4.0&    0.149E-06 &  4.1\\
 5&    0.324E-08 &  4.0&    0.912E-08 &  4.0\\  \hline  
 &\multicolumn{4}{c}{$\lambda=10^3$ }\\
  \hline   
 3&    0.760E-05 &  4.2&    0.234E-08 &  4.0\\
 4&    0.468E-06 &  4.0&    0.138E-09 &  4.1\\
 5&    0.293E-07 &  4.0&    0.592E-11 &  4.5\\
  \hline  
\end{tabular} \end{center}  \end{table}


\begin{thebibliography}{99}

\bibitem{BCHLM2015} {\sc E. Burman, S. Claus, P. Hansbo, M. cLarson and A. Massing}, {\em
CutFEM: Discretizing geometry and partial differential equations}, Int. J. Numer. Methods Engrg., vol. 104 (7), pp. 472-501, 2015.

 

\bibitem{CCCGW2011} {\sc D. Chen, Z. Chen, C. Chen, W. Geng and G. Wei}, {\em
MIBPB: a software package for electrostatic analysis}, J. Comput. Chem., vol. 32 (4), pp. 756-770, 2011.

\bibitem{CCGL2022} {\sc S. Cao, L. Chen, R. Guo and F. Lin}, {\em
Immersed virtual element methods for elliptic interface problems in two dimensions}, J. Sci. Comput., vol. 93, pp. 12, 2022.

\bibitem{CCW2022} {\sc P. Cao, J. Chen and F. Wang}, {\em
An extended mixed finite element method for elliptic interface problems}, Comput. Math. Appl., vol. 113, pp. 148-159, 2022. 

\bibitem{CWW2017} {\sc L. Chen, H. Wei and M. Wen}, {\em
An interface-fitted mesh generator and virtual element methods for elliptic interface problems}, J. Comput. Phys., vol. 334, pp. 327-348, 2017.
 
 

 \bibitem{pdwg3} {\sc W. Cao, C. Wang and J. Wang},  {\em An $L^p$-Primal-Dual Weak Galerkin Method for div-curl Systems}, Journal of Computational and Applied Mathematics, vol. 422, 114881, 2023.
 \bibitem{pdwg4}{\sc  W. Cao, C. Wang and J. Wang},  {\em An $L^p$-Primal-Dual Weak Galerkin Method for Convection-Diffusion Equations}, Journal of Computational and Applied Mathematics, vol. 419, 114698, 2023. 
 \bibitem{pdwg5}{\sc W. Cao, C. Wang and J. Wang},  {\em A New Primal-Dual Weak Galerkin Method for Elliptic Interface Problems with Low Regularity Assumptions}, Journal of Computational Physics, vol. 470, 111538, 2022.
   \bibitem{wg11}{\sc  S. Cao, C. Wang and J. Wang},  {\em A new numerical method for div-curl Systems with Low Regularity Assumptions}, Computers and Mathematics with Applications, vol. 144, pp. 47-59, 2022.
\bibitem{pdwg10}{\sc  W. Cao and C. Wang},  {\em New Primal-Dual Weak Galerkin Finite Element Methods for Convection-Diffusion Problems}, Applied Numerical Mathematics, vol. 162, pp. 171-191, 2021. 



\bibitem{DWX2017} {\sc H. Dong, B. Wang and Z. Xie}, {\em
An unfitted hybridizable discontinuous Galerkin method for the Poisson interface problem and its error analysis}, IMA J. Numer.
Anal., vol. 37, pp. 444-476, 2017.



\bibitem{GT1983} {\sc D. Gilbarg and N.   Trudinger}, {\em
Elliptic partial differential equations of second order}, 2nd ed., Springer-Verlag, Berlin, 1983.

\bibitem{JH2003} {\sc J. Hesthaven}, {\em
High-order accurate methods in time-domain computational electromagnetics: A review}, Adv. Imaging Electron Phys., vol. 127, pp. 59-123, 2003.

\bibitem{HCWX2020} {\sc Y. Han, H. Chen, X. Wang and X. Xie}, {\em
Extended HDG methods for second order elliptic interface problem}, J. Sci. Comput., vol. 84, pp. 22, 2020.



\bibitem{HH2002} {\sc A. Hansbo and P. Hansbo}, {\em
An unfitted finite element method, based on Nitsches method, for elliptic interface problems}, Comput. Methods Appl. Mech. Engrg., vol. 191, pp. 5537-5552, 2002.



\bibitem{HLOZ1997} {\sc T. Hou, Z. Li, S. Osher and H. Zhao}, {\em
A hybrid method for moving interface problems with application to the hele-shaw flow}, J. Comput. Phys., vol. 134 (2), pp. 236-252, 1997.

 

\bibitem{HNPK2013} {\sc L.  Huynh, N. Nguyen, J. Peraire and B. Khoo}, {\em
A high-order hybridizable discontinuous Galerkin method for elliptic interface problems}, Int. J. Numer. Meth. Engng., vol. 93, pp. 183-200, 2013.

 
 
  


\bibitem{JWCL2022} {\sc H. Ji, F. Wang, J. Chen and Z. Li}, {\em
A new parameter free partially penalized immersed finite element and the optimal convergence analysis}, Numer. Math., vol. 150, pp. 1035-1086, 2022.


\bibitem{KLL2009} {\sc B. Khoo, Z. Li and P. Lin}, {\em
Interface problems and methods in biological and physical flow}, World Scientific, 2009.

\bibitem{AL2009} {\sc A. Layton}, {\em
Using integral equations and the immersed interface method to solve immersed boundary problems with stiff forces}, Comput. Fluids., vol. 38, pp. 266-272, 2009.

\bibitem{ZL1998} {\sc Z. Li}, {\em
The immersed interface method using a finite element formulation}, Appl. Numer. Math., vol. 27, pp. 253-267, 1998.


\bibitem{LS2003} {\sc X. Liu and T. Sideris}, {\em
Convergence of the ghost fluid method for elliptic equations with interfaces}, Math. Comput., vol. 244, pp. 1731-1746, 2003.

 
 



\bibitem{LW2022} {\sc Y. Liu and Y. Wang}, {\em
Polygonal discontinuous Galerkin methods on curved region and its multigrid preconditioner}, Math. Numer. Sinica., vol. 44, pp. 396-421, 2022.


 \bibitem{wg14}{\sc D. Li, Y. Nie, and C. Wang},  {\em Superconvergence of Numerical Gradient for Weak Galerkin Finite Element Methods on Nonuniform Cartesian Partitions in Three Dimensions}, Computers and Mathematics with Applications, vol 78(3), pp. 905-928, 2019.  
  \bibitem{wg1} {\sc D. Li, C. Wang and J. Wang},  {\em An Extension of the Morley Element on General Polytopal Partitions Using Weak Galerkin Methods}, Journal of Scientific Computing, 100, vol 27, 2024.  
 \bibitem{wg2} {\sc D. Li, C. Wang and S. Zhang},  {\em Weak Galerkin methods for elliptic interface problems on curved polygonal partitions}, Journal of Computational and Applied Mathematics, pp. 115995, 2024. 
\bibitem{wg5} {\sc D. Li, C. Wang, J.  Wang and X. Ye},  {\em Generalized weak Galerkin finite element methods for second order elliptic problems}, Journal of Computational and Applied Mathematics, vol. 445, pp. 115833, 2024.
 \bibitem{wg6} {\sc D. Li, C. Wang, J. Wang and S. Zhang},  {\em High Order Morley Elements for Biharmonic Equations on Polytopal Partitions}, Journal of Computational and Applied Mathematics, Vol. 443, pp. 115757, 2024.
 \bibitem{wg7} {\sc D. Li, C. Wang and J. Wang},  {\em Curved Elements in Weak Galerkin Finite Element Methods}, Computers and Mathematics with Applications, Vol. 153, pp. 20-32, 2024.
\bibitem{wg8} {\sc D. Li, C. Wang and J. Wang},  {\em Generalized Weak Galerkin Finite Element Methods for Biharmonic Equations}, Journal of Computational and Applied Mathematics, vol. 434, 115353, 2023.
 \bibitem{pdwg1} {\sc D. Li, C. Wang and J. Wang},  {\em An $L^p$-primal-dual finite element method for first-order transport problems}, Journal of Computational and Applied Mathematics, vol. 434, 115345, 2023.
 \bibitem{pdwg2} {\sc D. Li and C. Wang},  {\em A simplified primal-dual weak Galerkin finite element method for Fokker-Planck type equations}, Journal of Numerical Methods for Partial Differential Equations, vol 39, pp. 3942-3963, 2023.
\bibitem{pdwg6}{\sc  D. Li, C. Wang and J. Wang},  {\em Primal-Dual Weak Galerkin Finite Element Methods for Transport Equations in Non-Divergence Form}, Journal of Computational and Applied Mathematics, vol. 412, 114313, 2022.
  \bibitem{wg13}{\sc  D. Li, C. Wang, and J. Wang},  {\em Superconvergence of the Gradient Approximation for Weak Galerkin Finite Element Methods on Rectangular Partitions}, Applied Numerical Mathematics, vol. 150, pp. 396-417, 2020.
 
  

\bibitem{WGC2022} {\sc Y. Wang, F. Gao and J. Cui}, {\em
A conforming discontinuous Galerkin finite element method for elliptic interface problems}, J. Comput. Appl. Math., vol. 412, pp. 114304, 2022.



   \bibitem{wang}{\sc C. Wang},  {\em C. Wang, Auto-Stabilized Weak Galerkin Finite Element Methods on Polytopal Meshes without Convexity Constraints}, Journal of Computational and Applied Mathematics, vol. 466, 116572, 2025.
 
   \bibitem{wg15}{\sc C. Wang},  {\em New Discretization Schemes for Time-Harmonic Maxwell Equations by Weak Galerkin Finite Element Methods}, Journal of Computational and Applied Mathematics, Vol. 341, pp. 127-143, 2018.  
 
   \bibitem{pdwg7}{\sc  C. Wang},  {\em Low Regularity Primal-Dual Weak Galerkin Finite Element Methods for Ill-Posed Elliptic Cauchy Problems}, Int. J. Numer. Anal. Mod., vol. 19(1), pp. 33-51, 2022.
 
 \bibitem{pdwg8}{\sc  C. Wang},  {\em A Modified Primal-Dual Weak Galerkin Finite Element Method for Second Order Elliptic Equations in Non-Divergence Form}, Int. J. Numer. Anal. Mod., vol. 18(4), pp. 500-523, 2021.

 
 \bibitem{pdwg13}{\sc  C. Wang},  {\em A New Primal-Dual Weak Galerkin Finite Element Method for Ill-posed Elliptic Cauchy Problems}, Journal of Computational and Applied Mathematics, vol 371, 112629, 2020.
 
  
  
   
 \bibitem{pdwg11}{\sc  C. Wang and J. Wang},  {\em A Primal-Dual Weak Galerkin Finite Element Method for Fokker-Planck Type Equations}, SIAM Numerical Analysis, vol. 58(5), pp. 2632-2661, 2020.
 \bibitem{pdwg12}{\sc  C. Wang and J. Wang},  {\em A Primal-Dual Finite Element Method for First-Order Transport Problems}, Journal of Computational Physics, Vol. 417, 109571, 2020.
  
 \bibitem{pdwg14}{\sc  C. Wang and J. Wang},  {\em Primal-Dual Weak Galerkin Finite Element Methods for Elliptic Cauchy Problems}, Computers and Mathematics with Applications, vol 79(3), pp. 746-763, 2020. 
 \bibitem{pdwg15}{\sc  C. Wang and J. Wang},  {\em A Primal-Dual Weak Galerkin Finite Element Method for Second Order Elliptic Equations in Non-Divergence form}, Mathematics of Computation, Vol. 87, pp. 515-545, 2018.  
   
 \bibitem{wg17}{\sc C. Wang and J. Wang},  {\em Discretization of Div-Curl Systems by Weak Galerkin Finite Element Methods on Polyhedral Partitions}, Journal of Scientific Computing, Vol. 68, pp. 1144-1171, 2016.    
   \bibitem{wg19}{\sc C. Wang and J. Wang},  {\em A Hybridized Formulation for Weak Galerkin Finite Element Methods for Biharmonic Equation on Polygonal or Polyhedral Meshes}, International Journal of Numerical Analysis and Modeling, Vol. 12, pp. 302-317, 2015. 
 \bibitem{wg20}{\sc  J. Wang and C. Wang},  {\em Weak Galerkin Finite Element Methods for Elliptic PDEs}, Science China, Vol. 45, pp. 1061-1092, 2015.  
 \bibitem{wg21}{\sc C. Wang and J. Wang},  {\em An Efficient Numerical Scheme for the Biharmonic Equation by Weak Galerkin Finite Element Methods on Polygonal or Polyhedral Meshes}, Journal of Computers and Mathematics with Applications, Vol. 68, 12, pp. 2314-2330, 2014.  
 
   \bibitem{wg18}{\sc C. Wang, J. Wang, R. Wang and R. Zhang},  {\em A Locking-Free Weak Galerkin Finite Element Method for Elasticity Problems in the Primal Formulation}, Journal of Computational and Applied Mathematics, Vol. 307, pp. 346-366, 2016.   
 

 
 \bibitem{wg12}{\sc  C. Wang, J. Wang, X. Ye and S. Zhang},  {\em De Rham Complexes for Weak Galerkin Finite Element Spaces}, Journal of Computational and Applied Mathematics, vol. 397, pp. 113645, 2021.
 
 \bibitem{wg3} {\sc C. Wang, J. Wang and S. Zhang},  {\em Weak Galerkin Finite Element Methods for Optimal Control Problems Governed by Second Order Elliptic Partial Differential Equations}, Journal of Computational and Applied Mathematics, in press, 2024. 
 
 \bibitem{itera} {\sc C. Wang, J. Wang and S. Zhang},  {\em A parallel iterative procedure for weak Galerkin methods for second order elliptic problems}, International Journal of Numerical Analysis and Modeling, vol. 21(1), pp. 1-19, 2023.
 \bibitem{wg9} {\sc C. Wang, J. Wang and S. Zhang},  {\em Weak Galerkin Finite Element Methods for Quad-Curl Problems}, Journal of Computational and Applied Mathematics, vol. 428, pp. 115186, 2023.

 
   \bibitem{wy3655} {\sc J. Wang, and X. Ye}, {\em A weak Galerkin mixed finite element method for second-order elliptic problems}, Math. Comp., vol. 83, pp. 2101-2126, 2014.



\bibitem{ellip_JCAM2013} {\sc J. Wang and X. Ye}, {\em
A weak Galerkin finite element method for second-order elliptic problems}, J. Comput. Appl. Math., vol. 241, pp. 103-115, 2013.
 



\bibitem{wg4} {\sc C. Wang, X. Ye and S. Zhang},  {\em A Modified weak Galerkin finite element method for the Maxwell equations on polyhedral meshes}, Journal of Computational and Applied Mathematics, vol. 448, pp. 115918, 2024. 
 
 \bibitem{wz2023} {\sc C. Wang and S. Zhang}, {\em
A Weak Galerkin Method for Elasticity Interface Problems}, Journal of Computational and Applied Mathematics, vol. 419, 114726, 2023. 

   


   \bibitem{wg10}{\sc  C. Wang and S. Zhang},  {\em A Weak Galerkin Method for Elasticity Interface Problems}, Journal of Computational and Applied Mathematics, vol. 419, 114726, 2023. 
  \bibitem{pdwg9}{\sc  C. Wang and L. Zikatanov},  {\em Low Regularity Primal-Dual Weak Galerkin Finite Element Methods for Convection-Diffusion Equations}, Journal of Computational and Applied Mathematics, vol 394, 113543, 2021.
 
 \bibitem{wg16}{\sc  C. Wang and H. Zhou},  {\em A Weak Galerkin Finite Element Method for a Type of Fourth Order Problem arising from Fluorescence Tomography}, Journal of Scientific Computing, Vol. 71(3), pp. 897-918, 2017.  

\bibitem{XXW2020} {\sc Y. Xiao, J. Xu and F. Wang}, {\em
High-order extended finite element methods for solving interface problems}, Comput. Methods Appl. Mech. Engrg., vol. 364, pp. 112964, 2020.



\bibitem{YW2007} {\sc S. Yu and G. Wei}, {\em
Three-dimensional matched interface and boundary (MIB) method for treating geometric singularities}, J. Comput. Phys., vol. 227, pp. 602-632, 2007.
 


\bibitem{ZZFW2006} {\sc Y. Zhou, S. Zhao, M. Feig and G.  Wei}, {\em
High order matched interface and boundary method for elliptic equations with discontinuous coefficients and singular sources}, J. Comput. Phys., vol. 213 (1), pp. 1-30, 2006.
 

 

   

  

 \bibitem{ye}{\sc  Y. Xiu and S. Zhang},  {\em  A stabilizer-free weak Galerkin finite element method on
polytopal meshes}, Journal of  Computational and Applied Mathematics, vol 371, 112699, 2020.
\end{thebibliography}
\end{document}